\theoremstyle{plain}
\theoremstyle{definition}
\newtheorem {theorem}{Theorem}[section]
\newtheorem {lemma}[theorem]{Lemma}
\newtheorem {proposition} [theorem]{Proposition}
\newtheorem{definition}[theorem]{Definition}
\newtheorem{remark}[theorem]{Remark}
\theoremstyle{remark}
\newcommand{\R}{\mathbb R}
\newcommand{\N}{\mathbb N} 
\newcommand{\Lie}{\text{Lie}}
\newcommand{\Tan}{\text{Tan}}
\newcommand{\g}{\mathfrak{g}}
\newcommand{\barint}
{\rule[.036in]{.12in}{.009in}\kern-.16in \displaystyle\int}
\newcommand\restr[2]{{
		\left.\kern-\nulldelimiterspace
		#1
		\vphantom{\big|}
		\right|_{#2}
}}
\DeclareMathOperator{\dimens}{dim}
\newcommand{\set}[1]{\left\{#1\right\}}
\newcommand{\pa}[1]{\left(#1\right)}
\newcommand{\norm}[1]{\left\|#1\right\|}
\renewcommand{\epsilon} {\varepsilon}
\renewcommand{\phi} {\varphi}
\renewcommand{\theta}{\vartheta}
\renewcommand{\d}{\delta}
\renewcommand{\bar}{\overline}
\newcommand{\ex}{\mathrm e}
\long\def\MSC#1\EndMSC{\def\arg{#1}\ifx\arg\empty\relax\else
	{\par\narrower\noindent
		{\small\it 2010 Mathematics Subject Classification.} \small #1\par}\fi}
\long\def\KEY#1\EndKEY{\def\arg{#1}\ifx\arg\empty\relax\else
	{\par\narrower\noindent
		{\small\it Keywords and Phrases.} \small #1\par}\fi}
\subjclass[2010]{53C17, 49K30, 28A75}
\keywords{Length minimizers, Carnot-Carathéodory spaces, sub-Riemannian geometry, Carnot groups}
\begin{document}

\author[R.~Monti]{Roberto Monti}
\email{monti@math.unipd.it}

\author[A.~Pigati]{Alessandro Pigati}
\email{alessandro.pigati@math.ethz.ch}

\author[D.~Vittone]{Davide Vittone}
\email{vittone@math.unipd.it}

\address[Monti and Vittone]
{Universit\`a di Padova, Dipartimento di Matematica ``T. Levi-Civita'',
	via Trieste 63, 35121 Padova, Italy}

\address[Pigati]{ETH Z\"urich, Department of Mathematics,
	R\"amistrasse 101, 8092 Z\"urich, Switzerland}
	
\thanks{R.~M. and D.~V. are supported by MIUR (Italy)  and University of Padova. D.~V. is supported by INdAM-GNAMPA Project 2017 ``Campi vettoriali, superfici e perimetri in geometrie singolari''.}

\title[On tangent cones to length minimizers in CC spaces]
{On tangent cones to length minimizers \\ in Carnot--Carath\'eodory spaces}

\begin{abstract} 
We give a detailed proof of some facts about the blow-up of
horizontal curves in Carnot--Carath\'eodory spaces.
\end{abstract}
\maketitle

\section{Introduction}

We give a detailed proof of some facts about the blow-up of
horizontal curves in Carnot--Carath\'eodory spaces.
These results are crucially used in \cite{HL,LM,MPV}. The proof of
a fraction of these results was already sketched, in a special case, in \cite[Section 3.2]{R}.

Let $M$ be a connected
$n$-dimensional $C^\infty$-smooth manifold and  $\mathcal X=\{
X_1,\ldots,X_r\}$, $r\geq 2$,  a system of $C^\infty$-smooth vector fields on
$M$ that are pointwise linearly independent 
and satisfy the H\"ormander condition introduced below.
We call the pair $(M,\mathcal X)$ a \emph{Carnot--Carath\'eodory (CC) structure}.
Given an interval   $I\subseteq\R$, a Lipschitz curve $\gamma:I\to M$ is
said to be \emph{horizontal}
if there exist  functions $h_1,\ldots, h_r \in L^\infty(I)$ such that for
a.e.~$t\in I$ we have
\begin{equation}\label{horiz}
\dot \gamma (t) = \sum_{i=1}^r h_i(t) X_i (\gamma(t)).
\end{equation}
The function $h\in L^\infty(I;\R^r)$ is called the \emph{control} of $\gamma$. Letting $|h|:=(h_1^2+\ldots+h_r^2)^{1/2}$, the length of $\gamma$ is then
defined as
\[
L(\gamma):=\int_I |h(t)|\,dt.
\]
Since $M$ is connected, by the Chow--Rashevsky theorem (see e.g. \cite{chow,rashevski,cara}) for any pair of
points $x,y \in M$ there exists a horizontal curve
joining $x$ to $y$. We can therefore define a distance function $d: M\times
M\to[0,\infty)$ letting
\begin{equation}\label{diddi}
d(x,y) := \inf \big\{ L(\gamma) \mid \gamma: [0,T]\to M \textrm{ horizontal with
	$\gamma(0) = x$ and $\gamma(T)=y$}\big\}.
\end{equation}
The resulting metric space $(M,d)$ is a \emph{Carnot--Carath\'eodory  space}.
Since our analysis is local, our results apply in particular to \emph{sub-Riemannian manifolds}
$(M, \mathcal D, g)$, where
$\mathcal D \subset TM$ is a completely non-integrable distribution and $g$ is a
smooth metric on $\mathcal D$.

If the closure of any ball in $(M,d)$ is compact, then
the infimum in \eqref{diddi} is a minimum, i.e., any pair of points can be
connected by a length-minimizing curve. 
A horizontal curve $\gamma:[0,T]\to M$ is a \emph{length minimizer} if $L(\gamma) =
d(\gamma(0),\gamma(T))$.

The main contents of the paper are the following:
\begin{itemize}[leftmargin=*]

\item[(i)]  we define a tangent
Carnot--Carathéodory structure $(M^\infty,\mathcal{X}^\infty)$ at any point of
$M$, using exponential coordinates of the first kind, see Section \ref{nilpsec};

\item[(ii)] in Section \ref{tre},
  we
define the tangent
cone for a horizontal curve, at a given time, as the set of all possible
blow-ups in $(M^\infty,\mathcal{X}^\infty)$ of the curve, and we show that this
cone is always
nonempty, see Proposition \ref{TC};

\item[(iii)] 
we show that, if the curve has a
right derivative at the given time, the (positive) tangent cone consists of a single half-line,
see Theorem \ref{1.5};

\item[(iv)] 
if the curve is a length minimizer, in Theorem \ref{LM=LM} 
we show that all the
blow-ups are length
minimizers  in $(M^\infty,\mathcal{X}^\infty)$, as well;

\item[(v)] 
in Section \ref{4}, 
we show that a
tangent Carnot--Carathéodory structure can   be lifted to a free Carnot
group, in a way that preserves length minimizers. 
\end{itemize}

\section{Nilpotent approximation: definition of a tangent structure}\label{nilpsec}

In this section we introduce some basic notions about Carnot--Carath\'eodory
spaces.  Then we describe the
structure of a specific frame of vector fields $Y_1,\ldots,Y_n$ (constructed
below)  in exponential coordinates, see Theorem  \ref{U}. We also prove a lemma
describing the infinitesimal
behaviour of the Carnot--Carath\'eodory distance $d$ near $0$, with 
respect to suitable anisotropic dilations, see Lemma \ref{fare}.

We denote by $\Lie (X_1,\ldots,X_r)$ the real Lie algebra generated  by
$X_1,\ldots,X_r$ through iterated commutators.
The evaluation of this Lie algebra at a point $x\in M$ is a vector subspace of the
tangent space $T_xM$.
If, for any $x\in M$, we have
\begin{equation*}\label{hor}
\Lie (X_1,\ldots,X_r) (x) = T_x M,
\end{equation*}
we say that the system $\mathcal X = \{ X_1,\ldots,X_r\}$ satisfies the
\emph{H\"ormander condition} and we call the pair $(M, \mathcal X)$ a
\emph{Carnot--Carath\'eodory (CC) structure}.

Given a point $x_0\in M$, let $\phi \in
C^\infty(U;\R^n)$
be a chart such that $U$ is an open neighborhood of $x_0$ and $\phi(x_0)=0$.
Then $V:=\phi(U)$ is an open neighborhood of $0\in\R^n$ and the system of vector fields $Y_ i := \phi_* X_i$, with $i=1,\ldots,r$, still satisfies
the H\"ormander condition in $V$.

For a multi-index $J = (j_1,\ldots,j_k)$ with $k\ge 1$ and $j_1,\ldots,j_k \in
\{1,\ldots,r\}$, define the iterated commutator
\[ 
Y_J:=[Y_{j_1},\ldots,Y_{j_{k-1}},Y_{j_k}] 
\]
where, here and in the following, for given vector fields $V_1,\ldots, V_q$ we
use
the short notation $[V_1,\ldots ,V_q]$ to denote the commutator
$[V_1,[\cdots,[V_{q-1},V_q]\cdots]]$. 
We say that $Y_J$ is a commutator of \emph{length} $\ell(J) := k$ and we denote by $L^j$ the linear span of $\set{Y_J(0)\mid \ell(J)\le j}$,
so that
\[
\{ 0\}=L^0\subseteq L^1\subseteq\cdots\subseteq L^s=\R^n 
\]
for some minimal $s\ge 1$.
We select multi-indices $J_1=(1),\dots,J_r=(r),J_{r+1},\dots,J_n$ such that, for each $1\le j\le s$,
\[ \ell(J_{\dimens L^{(j-1)}+1})=\dots=\ell(J_{\dimens L^j})=j \]
and such that,
setting $Y_i:=Y_{J_i}$, the vectors $Y_1(0),\dots,Y_{\dimens L^j}(0)$ form a basis of
$L^j$. 
In particular, we have $\dimens L^1 =r$.

Possibly composing $\phi$ with a diffeomorphism (and shrinking $U$ and $V$), we can assume that $V$ is convex, that for any point $x=(x_1,\ldots,x_n) \in V$ we have
\begin{equation}\label{EXP}
x = \exp\Big(\sum_{i=1}^n x_i Y_i\Big)(0)
\end{equation}
and that $Y_1,\dots,Y_n$ are linearly independent on $V$.
Such coordinates $(x_1,\ldots,x_n)$ are called \emph{exponential coordinates of the first kind}
associated
with the frame $Y_1,\ldots, Y_n$. 
To each coordinate $x_i$ we assign the weight $w_i := \ell(J_i)$
and we define the anisotropic dilations $\delta_\lambda:\R^n\to\R^n$
\begin{equation}\label{dil}
\delta_\lambda (x) := (\lambda^{w_1} x_1,\ldots, \lambda ^{w_n} x_n),\qquad
\lambda >0.
\end{equation}

\begin{definition}
	A function $f:\R^n\to\R$ is $\delta$-\emph{homogeneous of degree}
$w\in\N$ if $f(\delta_\lambda (x)) = \lambda^w f(x)$ for all $x\in\R^n$,
$\lambda >0$.
We will refer to such a $w$ as the $\delta$\emph{-degree} of $f$.
\end{definition}

We will frequently use the anisotropic (pseudo-)norm 
\begin{equation}\label{eq:norma}
\| x\| \, := \sum_{i=1}^n |x_i|^{1/ w_i}, \qquad x\in\R^n.
\end{equation} The norm function, $x\mapsto\|x\|$, is
$\delta$-homogeneous of degree $1$.

We recall two facts about the exponential map, which are discussed
e.g.~in \cite[pp.~141--147]{NSW}.  First, for any $\psi\in
C^\infty(V)$, we have the  Taylor expansion 
\begin{equation}\label{eq:expexp} 
\psi\Big(\exp\Big(\sum_{i=1}^n s_i Y_i\Big)(0)\Big)\sim \pa{\ex^{\sum_i
s_i Y_i}\psi}(0)
\end{equation}
where
\begin{itemize}
\item the left-hand side is a function of $s\in\R^n$ near $0$;
\item the right-hand side is a shorthand for the formal series
\[ 
\sum_{k=0}^\infty\frac{1}{k!}((s_1 Y_1+\dots+s_n
Y_n)^k\psi)(0)=\sum_{k=0}^\infty\frac{1}{k!}\sum_{i_1,\dots,i_k\in\{1,\dots,n\}}
s_{i_1}\cdots s_{i_k}(Y_{i_1}\cdots Y_{i_k}\psi)(0);
\]
\item given a smooth function $f(x)$ and a formal power series $S(x)$, we define
the relation  $f(x)\sim S(x)$ if  the formal Taylor series of
$f(x)$  at 0  is $S(x)$.
\end{itemize}
 
Second, letting $S:=\sum_{i=1}^n s_i Y_i$ and $T:=\sum_{i=1}^n t_i Y_i$, the
following formal Taylor expansions hold as well: 
\begin{equation}
\label{eq:bchcampi}
\psi\Big(\exp(S)\circ\exp(T)(0)\Big)\sim\pa{\ex^{T} \ex^{S}\psi}(0)=(\ex^{P(T,
S)}\psi)(0),
\end{equation}
where
\begin{equation}\label{eq:serieP}
 P(T,S):=\sum_{p=1}^\infty\frac{(-1)^{p+1}}{p}\sum_{k_i+\ell_i\ge
1}\frac{[T^{k_1},S^{\ell_1},\ldots, T^{k_p},S^{\ell_p}]}{k_1!\cdots
k_p!\ell_1!\cdots\ell_p!(k_1+\ell_1+\dots+k_p+\ell_p)}.
\end{equation}
Above, the notation $T^k$ stands for  $T,\dots,T$, $k$
times.

\begin{remark}\label{rmkalgebra}
	The formal power series identity $\ex^T  \ex^S=\ex^{P(T,S)}$ is a
purely algebraic fact which holds in any (noncommutative, graded, complete)
associative real algebra, see e.g.~\cite[Sec. X.2]{Hoc}: this principle will be
used in the proofs of Theorem \ref{U} and Lemma \ref{fare}. 
\end{remark}

In the case of 
exponential coordinates {\em of the second kind}, the following theorem is
proved in  \cite{Her}.

\begin{theorem} \label{U}
	The vector
	fields $Y_1,\ldots,Y_n$ are of the
	form
	\begin{equation} \label{formo}
	Y_i (x) = \sum_{j=i}^ n a_{ij}(x) \frac{\partial}{\partial x_j}, \qquad 
	x\in V,
	\quad i=1,\ldots,n,
	\end{equation}
	where $a_{ij} \in C^\infty(V)$ are functions such that
	$a_{ij} =p_{ij}+ r_{ij}$ and:
	\begin{itemize}
		\item[(i)] for $w_j\ge w_i$, $p_{ij}$ are $\delta$-homogeneous polynomials in $\R^n$ of 
		degree
		$w_j-w_i$;
		
		\item[(ii)]  for $w_j\le w_i$, $p_{ij} = \delta_{ij}$ (in particular, $p_{ij}=0$ for $w_j<w_i$);
		
		\item[(iii)]   $r_{ij} \in C^\infty(V)$ satisfy  $r_{ij}(0)=0$;
		
		\item[(iv)]
		for $w_j\ge w_i$, $r_{ij}(x) =
		o(\|x\|
		^{w_j-w_i})$ as $x\to0$.
	\end{itemize}
	
\end{theorem}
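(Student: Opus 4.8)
The plan is to read off each coefficient $a_{ij}$ as the derivative of the $j$-th coordinate function along the flow of $Y_i$, and to compute its Taylor expansion at $0$ through the formal expansions \eqref{eq:expexp}, \eqref{eq:bchcampi} and \eqref{eq:serieP}, sorting every contribution by its $\delta$-degree. Writing $x_j$ for the $j$-th coordinate function, from \eqref{EXP} one has $a_{ij}(x)=(Y_ix_j)\big(\exp(\sum_k x_kY_k)(0)\big)$, so \eqref{eq:expexp} gives the formal identity
\[
a_{ij}(x)\sim\Big(\ex^{\sum_k x_kY_k}\,Y_ix_j\Big)(0)=\sum_{m\ge0}\frac1{m!}\sum_{k_1,\dots,k_m} x_{k_1}\cdots x_{k_m}\,(Y_{k_1}\cdots Y_{k_m}Y_ix_j)(0).
\]
Hence the Taylor coefficient of a monomial $x_{k_1}\cdots x_{k_m}$ is governed by the value at $0$ of $x_j$ differentiated along the product $Y_{k_1}\cdots Y_{k_m}Y_i$ of $m+1$ frame fields, whose total weight is $W:=w_{k_1}+\dots+w_{k_m}+w_i$, while the monomial itself has $\delta$-degree $W-w_i$. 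I will therefore assign to $x_j$ the weight $w_j$ and to each $Y_k$ the order $-w_k$, and keep track of weighted degrees throughout.

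The heart of the matter is the privileged-coordinate estimate
\[
(Y_{a_1}\cdots Y_{a_p}x_j)(0)=0\qquad\text{whenever}\qquad w_{a_1}+\dots+w_{a_p}<w_j.
\]
I would derive it from two facts intrinsic to the construction. First, feeding the tautology $x_j(\exp(\sum_k s_kY_k)(0))=s_j$ into \eqref{eq:expexp} and matching formal coefficients shows that the fully symmetrized derivatives $\sum_\sigma (Y_{a_{\sigma(1)}}\cdots Y_{a_{\sigma(p)}}x_j)(0)$ vanish for every $p\ge2$. Second, since each frame field $Y_a$ is an iterated bracket of $Y_1,\dots,Y_r$ of length $w_a$, any iterated bracket of frame fields of total weight $W$ takes its value at $0$ inside $L^{W}$, and therefore kills $x_j$ once $W<w_j$, because $\mathrm dx_j|_0$ is dual to $Y_j(0)\notin L^{w_j-1}$. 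Decomposing an arbitrary product into symmetrized products and iterated brackets (Poincar\'e--Birkhoff--Witt) and inducting on $p$ then yields the displayed vanishing. Granting this, every monomial appearing in $a_{ij}$ has $\delta$-degree $W-w_i\ge w_j-w_i$: the part of degree exactly $w_j-w_i$ is a $\delta$-homogeneous polynomial $p_{ij}$ whose coefficients are the relevant iterated brackets of the frame fields, which is (i); a negative target degree is impossible, forcing $p_{ij}=0$ for $w_j<w_i$, while for $w_j=w_i$ only the term $m=0$ survives and contributes the constant $(Y_ix_j)(0)=\delta_{ij}$, which is (ii). Since no monomial of $\delta$-degree below $w_j-w_i$ occurs, the anisotropic Taylor theorem gives $r_{ij}:=a_{ij}-p_{ij}=o(\norm{x}^{\,w_j-w_i})$ and $r_{ij}(0)=0$, which are (iv) and (iii).

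It remains to obtain the \emph{exact} lower-triangular form $a_{ij}\equiv0$ for $j<i$, which is strictly stronger than the vanishing of $p_{ij}$, since a smooth remainder is not determined by its Taylor series. Here I would compare with exponential coordinates of the second kind $u\mapsto\exp(u_1Y_1)\circ\cdots\circ\exp(u_nY_n)(0)$, in which the corresponding statement is classical and is exactly the content quoted from \cite{Her}. The transition map $\Theta$ determined by $\exp(\sum_l x_lY_l)(0)=\exp(u_1Y_1)\circ\cdots\circ\exp(u_nY_n)(0)$ is, by the algebraic identity of Remark \ref{rmkalgebra} and the same weight bookkeeping, a diffeomorphism fixing $0$ with $\mathrm d\Theta_0=\mathrm{id}$ whose components satisfy $u_l=x_l+(\text{products of at least two variables of strictly smaller weight, hence of strictly smaller index})$; thus $\Theta$ and $\Theta^{-1}$ are lower triangular in the index. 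As the pushforward of a lower-triangular field by a lower-triangular diffeomorphism is again lower triangular, the exact triangularity transfers from the second-kind frame to ours.

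The main obstacle I foresee is the privileged-coordinate estimate together with the ensuing passage from purely formal Taylor data to the genuine smooth bound (iv): the series \eqref{eq:serieP} only pins down the polynomial part of $a_{ij}$, and converting ``no monomial of $\delta$-degree below $w_j-w_i$'' into the asymptotic $o(\norm{x}^{\,w_j-w_i})$ requires the anisotropic Taylor theorem together with a careful, weight-inductive control of how the non-homogeneous tails of the lower-weight fields feed into the higher-weight computations.
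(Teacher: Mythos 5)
Your route is genuinely different from the paper's: you expand $a_{ij}=Y_ix_j$ directly through \eqref{eq:expexp} and reduce everything to the privileged-coordinates vanishing $(Y_{a_1}\cdots Y_{a_p}x_j)(0)=0$ for $w_{a_1}+\dots+w_{a_p}<w_j$, whereas the paper transports the identity $Y_i=\sum_j a_{ij}\frac{\partial}{\partial x_j}$ to the origin by $\exp(-X)_*$, obtains a matrix identity $B(x)=A(x)(1-C(x))$ whose factors admit weighted estimates via BCH and Jacobi, and recovers the key bound \eqref{eq:formouno} by inverting $1-C$ with a Neumann series. But your reduction has a gap exactly at its declared heart. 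The symmetrization identity you extract from the tautology $x_j\big(\exp(\sum_k s_kY_k)(0)\big)=s_j$ is available only for the frame fields $Y_1,\dots,Y_n$ and their \emph{constant-coefficient} combinations, because the map $(V_1,\dots,V_p)\mapsto\sum_\sigma(V_{\sigma(1)}\cdots V_{\sigma(p)}x_j)(0)$ is linear over constants but depends on the fields near $0$, not merely on their values at $0$. The PBW correction terms, however, are symmetrized products of iterated brackets such as $[Y_a,Y_b]$, and such a bracket is \emph{not} a constant-coefficient combination of $Y_1,\dots,Y_n$ as a vector field: only its value at $0$ lies in the flag, which is your fact (b) and covers a single bracket factor, not a product of two or more. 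So for the intermediate terms ($1<q<p$ bracket factors) neither the symmetrized vanishing nor your inductive hypothesis applies, and the induction on $p$ does not close; re-expanding the brackets as operators just returns you to products of frame fields of the same total weight. The lemma itself is true (it is the classical fact that adapted first-kind coordinates are privileged), but it needs a real argument; the paper's pullback trick is designed precisely so that brackets only ever enter through their values at $0$, and so that formal data become genuine asymptotics via the matrix inversion rather than via a separate anisotropic Taylor step.

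The triangularity paragraph is also flawed, and here the target statement, not just the proof, is the problem. Your parenthetical ``hence of strictly smaller index'' is false: the corrections in the transition between first- and second-kind coordinates of $\delta$-degree exactly $w_l$ do involve only lower-weight variables, but the corrections of degree $>w_l$ can involve same- or higher-weight variables, because the structural relations among the actual fields need not be graded (nothing forbids, say, $[Y_1,Y_2]=Y_1$). Concretely, take $M=\R^2$, $Y_1=\partial_u$, $Y_2=\partial_v+u\,\partial_u$, so that $r=n=2$, $s=1$, $w_1=w_2=1$. First-kind coordinates are $\Phi(x)=(x_1\phi(x_2),x_2)$ with $\phi(y)=(\ex^y-1)/y$, and a direct computation gives $Y_2=x_1\big(1-\tfrac{\phi'}{\phi}(x_2)\big)\frac{\partial}{\partial x_1}+\frac{\partial}{\partial x_2}$, i.e. $a_{21}(x)=\tfrac12 x_1+O(|x|^2)\not\equiv 0$, even though in the second-kind coordinates $\exp(u_2Y_2)\circ\exp(u_1Y_1)(0)$ one has $Y_2=\partial_{u_2}$ exactly; the transition is $x_1=u_1+\tfrac{u_1u_2}{2}+\dots$, whose correction contains the same-weight variable $u_1$, so it is not index-triangular. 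Thus the exact lower-triangular form cannot be transferred and is in fact false in general. To be fair, you correctly sensed that \eqref{formo} is strictly stronger than the Taylor-level statements: the paper's own proof establishes only (i)--(iv) together with $a_{ij}(0)=\delta_{ij}$, which is all that is used later (e.g. in Proposition \ref{TC} and Theorem \ref{1.5}), so \eqref{formo} should be read with the sum starting at $j=1$. Apart from these two points, your derivation of (i)--(iv) from the vanishing lemma --- sorting monomials by $\delta$-degree and invoking the anisotropic Taylor expansion --- is correct.
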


\begin{proof} Suppose for a moment that  
\begin{equation}\label{eq:formouno} 
	a_{ij}(x)=O(\|x\|^{w_j-w_i}),\qquad i,j=1,\dots,n,\  w_j\ge w_i. 
\end{equation} 
Let  $p_{ij}$ be the sum of all monomials of $\delta$-degree 
$w_j-w_i$ in the Taylor expansion of $a_{ij}$, with the convention that $p_{ij}=0$ if $w_j<w_i$.
Statements (i) and (iv) then hold by construction, while (ii) and (iii) follow
from $a_{ij}(0)=\delta_{ij}$, which is a consequence of \eqref{EXP}.

Let us show \eqref{eq:formouno}. We pullback the identity $Y_i(x)=\sum_j a_{ij}(x)\frac{\partial}{\partial x_j}$ to the origin
using the map $\exp(-X)$ (locally defined near $x$), where $X:=\sum_k x_kY_k$, for a fixed $x\in V$. We  have
	\begin{equation}\label{eq:formopull} 
	\exp(-X)_* (Y_i(x))=\sum_j a_{ij}(x)\exp(-X)_*\Big(\frac{\partial}{\partial x_j}(x)\Big),
	\end{equation}
where the sum ranges from 1 to $n$. The above equation reads
	\[ 
	\sum_\ell b_{i\ell}(x)Y_\ell(0)=\sum_{j,\ell}a_{ij}(x)c_{j\ell}(x)Y_\ell(0) 
	\]
for suitable smooth coefficients $b_{i\ell}(x),c_{j\ell}(x)$. We claim that 
	\[
	b_{i\ell}(x)=O(\|x\|^{w_\ell-w_i}),\quad c_{j\ell}(x)=O(\|x\|^{w_\ell-w_j}),\quad\text{and}\quad c_{j\ell}(0)=\delta_{j\ell}.
	\]
Then, defining  $A:=(a_{ij})$, $B:=(b_{i\ell})$ and $C:=1-(c_{j\ell})$ ($1$ denoting the identity matrix), we obtain three $n\times n$ matrices  satisfying $B(x)=A(x)(1-C(x))$ and $C(0)=0$. In particular, $1-C(x)$ is invertible for $x$ close to 0 and $(1-C(x))^{-1}=\sum_{p=0}^\infty C(x)^p$. This gives 
	\[ 
	A(x)=\sum_{p=0}^s B(x) C(x)^p+o(|x|^s)=\sum_{p=0}^s B(x) C(x)^p+o(\|x\|^s) 
	\]
for any $s\in \N$, and \eqref{eq:formouno}   easily follows.
	
The proof of $c_{j\ell}(0)=\delta_{j\ell}$
 follows from the definition of $c_{j\ell}$ and from 
  $\frac{\partial}{\partial x_j}=Y_j(0)$,
  which in turn comes from \eqref{EXP}, as already observed.

	We prove the claim  $b_{i\ell}(x)=O(\|x\|^{w_\ell-w_i})$. By \eqref{EXP}, the left-hand side of \eqref{eq:formopull} satisfies
	\[ 
	\exp(-X)_* (Y_i(x)) = \frac{d}{dt}\exp(-X)\circ\exp(tY_i)\circ\exp(X)(0)\Big|_{t=0} .
	\]
Using \eqref{eq:bchcampi} and Remark \ref{rmkalgebra}, for any smooth $\psi$ we obtain
	\[ 
	\psi\big (\exp(-X)\circ\exp(tY_i)\circ\exp(X)(0)\big) \sim \ex^{P(P(X,tY_i),-X)}\psi(0),
	\]
the left-hand side being interpreted as a function of $(x,t)$.
We now differentiate this identity at $t=0$. 
Since $W(t) :=  P\big(P(X,tY_i),-X\big)  $ vanishes at $t=0$, one has $\frac{d}{dt}(\ex^{W(t)}\psi)(0)\Big|_{t=0}=\frac{d}{dt}(W(t)\psi)(0)\Big|_{t=0}$ 
and, letting $\psi$ range among the coordinate functions, 
we deduce that any finite-order expansion in $x$ of $\exp(-X)_* (Y_i(x))$ 
is a linear combination of terms
of the form
	\[ 
	x_{i_1}\cdots x_{i_p}[Y_{i_1},\ldots, Y_{i_m},Y_i, Y_{i_{m+1}},\ldots, Y_{i_p}](0)
	\]
where $p\geq 1$ and $0\leq m\leq p$. 	By Jacobi's identity, the
iterated commutator $[Y_{i_1},\ldots, Y_{i_m},Y_i, Y_{i_{m+1}},\ldots
,Y_{i_p}](0)$ is a linear combination of the vectors $Y_J(0)$ with $\ell(J)=\bar
w:=\sum_{q=1}^p w_{i_q}+w_i$ and so, by construction, it is a linear combination
of the vectors $Y_\ell(0)$ with $w_\ell\le\bar w$. Hence, letting
$w_\alpha:=\sum_{q=1}^n\alpha_q w_q$ for all $\alpha\in\N^n$, we have
	\[ 
	\exp (-X)_* (Y_i(x) )\sim\sum_\ell\sum_{\alpha:w_\alpha\ge w_\ell-w_i}d_{\alpha i\ell}x^\alpha Y_\ell(0) ,
	\]
	for suitable coefficients $d_{\alpha i\ell}\in\R$. This gives the required estimate.
	
The proof of $c_{j\ell}(x)=O(\|x\|^{w_\ell-w_j})$ is analogous to the preceding argument, once we observe that
	\[ 
	\exp(-X)_*\Big(\frac{\partial}{\partial x_j}(x)\Big)
	=\frac{d}{dt}\exp (-X)\circ\exp (X+t Y_j )(0)\Big|_{t=0}. 
	\]
	We can omit the details.
 \end{proof}

\begin{lemma} \label{fare}
	For any compact set $K\subset\R^n$ and any $\epsilon>0$ there exist $\delta>0$ and
	$\bar\lambda >0$ such that
	$\lambda  d(\delta_{1/\lambda } (x) , \delta_{1/\lambda  } (y))<
	\epsilon$ for all $x,y\in K$ with $|x-y|<\delta$ and all $\lambda \geq \bar \lambda $.
\end{lemma}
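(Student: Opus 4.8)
The plan is to reduce the statement, via an anisotropic rescaling, to an equicontinuity property of a convergent family of Carnot--Carath\'eodory distances. For $\lambda>0$ introduce the rescaled vector fields
\[ \hat Y_i^\lambda := \tfrac1\lambda (\delta_\lambda)_* Y_i, \qquad i=1,\dots,r, \]
which are defined near $0$ once $\lambda$ is large. Writing them in coordinates and invoking Theorem \ref{U} together with $w_1=\dots=w_r=1$: the $p_{ij}$-part of $a_{ij}$ contributes exactly $p_{ij}(x)$ (by $\delta$-homogeneity of degree $w_j-1$), while the remainder contributes $\lambda^{w_j-1}r_{ij}(\delta_{1/\lambda}(x))=o(1)$ by property (iv). Hence $\hat Y_i^\lambda\to Y_i^\infty:=\sum_{j\ge i}p_{ij}\,\partial_{x_j}$ in $C^\infty$ on compact sets as $\lambda\to\infty$. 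A direct change of variables shows that a curve $\beta$ is horizontal for $\{Y_i\}$ between $\delta_{1/\lambda}(x)$ and $\delta_{1/\lambda}(y)$ if and only if $\delta_\lambda\circ\beta$ is horizontal for $\{\hat Y_i^\lambda\}$ between $x$ and $y$, the $\hat Y^\lambda$-length of $\delta_\lambda\circ\beta$ being $\lambda$ times the $\{Y_i\}$-length of $\beta$. Denoting by $\hat d^\lambda$ the CC distance of $\{\hat Y_i^\lambda\}$, this yields the exact identity
\[ \lambda\, d(\delta_{1/\lambda}(x),\delta_{1/\lambda}(y)) = \hat d^\lambda(x,y), \]
so the lemma is equivalent to the uniform smallness of $\hat d^\lambda(x,y)$ for $x,y\in K$, $|x-y|<\delta$, and $\lambda\ge\bar\lambda$.

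Next I would exploit that the limiting fields $Y_1^\infty,\dots,Y_r^\infty$ are polynomial and satisfy the H\"ormander condition on all of $\R^n$, their iterated brackets spanning $\R^n$ exactly as the $Y_J(0)$ do. Since the H\"ormander condition is open --- it amounts to the non-vanishing of suitable $n\times n$ determinants formed from iterated brackets --- and since $\hat Y_i^\lambda\to Y_i^\infty$ with all derivatives, the family $\{\hat Y_i^\lambda\}_{\lambda\ge\bar\lambda}$ is uniformly bracket-generating on a fixed neighborhood of $K$, with uniform $C^k$-bounds. A quantitative (uniform) version of Chow's theorem then provides a constant $C$ and the step $s$, depending only on these uniform bounds, such that $\hat d^\lambda(x,y)\le C|x-y|^{1/s}$ for all $x,y\in K$ and all $\lambda\ge\bar\lambda$; choosing $\delta:=(\epsilon/C)^s$ concludes. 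Concretely, the connecting curves are built by concatenating flows of the $\hat Y_i^\lambda$ in the commutator-maneuver pattern dictated by the weights $w_i$, and the Baker--Campbell--Hausdorff expansion \eqref{eq:serieP} (cf. Remark \ref{rmkalgebra}) is used to compute the net displacement of such a concatenation and to verify that it matches the anisotropic norm \eqref{eq:norma} of the target increment.

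The main obstacle is precisely this uniform connectivity estimate: ordinary Chow/ball--box bounds hold for each fixed $\lambda$, but one must ensure the constants are independent of $\lambda$. This is where the $C^\infty_{\mathrm{loc}}$-convergence $\hat Y_i^\lambda\to Y_i^\infty$ is essential, since it supplies a single modulus controlling the whole family and allows the commutator-maneuver construction to be carried out with $\lambda$-independent time scales. A secondary, routine point is that $\hat Y_i^\lambda$ and the short connecting paths stay inside the domain $\delta_\lambda(V)$; this is automatic once $\bar\lambda$ is large, because $\delta_\lambda(V)$ exhausts $\R^n$ and the paths have length $O(|x-y|^{1/s})$. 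Alternatively, one may frame the argument as the locally uniform convergence $\hat d^\lambda\to d^\infty$ combined with the continuity of $d^\infty$ on the diagonal; but this stability of CC distances under convergence of the generating fields is itself equivalent to the uniform estimate above, so the quantitative Chow bound remains the crux.
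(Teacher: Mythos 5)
Your proposal is correct in outline but follows a genuinely different route from the paper. You push the rescaling into the vector fields, observing the exact identity $\lambda\,d(\delta_{1/\lambda}(x),\delta_{1/\lambda}(y))=\hat d^\lambda(x,y)$ (this is precisely the paper's \eqref{[7]}, and your convergence $\hat Y_i^\lambda\to Y_i^\infty$ is its \eqref{[1]}), and then reduce everything to a ball--box/Chow estimate $\hat d^\lambda(x,y)\le C|x-y|^{1/s}$ with constants \emph{uniform in} $\lambda$. The paper never needs such uniformity: it works with the single fixed system $Y_1,\dots,Y_n$, factors $\exp(S)(0)=\exp(P_N(-T,S))\circ\exp(T)(0)+o(|s|^N+|t|^N)$ via BCH, and extracts the factor $\lambda^{-1}$ purely algebraically from the $\delta$-homogeneity $q_J(\delta_{1/\lambda}x,\delta_{1/\lambda}y)=\lambda^{-\ell(J)}q_J(x,y)$, so that only the off-the-shelf fixed-system estimates \cite[Theorem~4]{NSW} and \cite[Lemma~2.20(b)]{NSW} are invoked; smallness near the diagonal then comes from $P_N(S,-S)=0$ and compactness. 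Your approach buys a stronger, quantitative H\"older modulus and a cleaner conceptual picture (equicontinuity of a convergent family of CC metrics), but its crux --- the uniform-in-family Chow bound --- is not available in the cited literature (NSW is stated for a fixed system) and must be re-proved by running the commutator-maneuver construction with the $C^\infty_{\mathrm{loc}}$-uniform bounds and a quantitative inverse function theorem, exactly as you sketch; you rightly flag that the alternative ``convergence of distances'' framing would be circular without this. So the argument is viable, but the asserted uniform estimate carries essentially the whole weight of the lemma and would need to be written out in full, whereas the paper's BCH-homogeneity trick sidesteps it entirely.
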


\begin{proof}
	Let $\psi\in C^\infty(V)$ be an arbitrary smooth function. Using \eqref{eq:expexp} and Remark \ref{rmkalgebra}, we have the following identity of formal power series in $(s,t)\in \R^n\times\R^n$: letting   $S:=\sum_{i=1}^n s_iY_i$ and $T:=\sum_{i=1}^n t_iY_i$,
	\begin{equation}\label{eq:seiex}
		\psi(\exp(S)(0))\sim(\ex^{S}\psi)(0)=(\ex^{T}\ex^{-T}\ex^{S}\psi)(0)=(\ex^{T}\ex^{P(-T,S)}\psi)(0). 
	\end{equation}
The truncation $P_N(-T,S) $ of the series $P(-T,S)$ up to $\delta$-degree $N:=w_n$ is 
	\begin{equation}\label{eq:truncexpl} P_N(-T,S) = \sum_{1\le\ell(J)\le N}q_J(s,t)Y_J, 
	\end{equation}
	where the sum is over all $J$ such that 
	$1\le\ell(J)\le N$ and 
	$q_J$ is a homogeneous polynomial with $\delta$-degree $\ell(J)$,
	i.e., $q_J(\delta_\lambda s,\delta_\lambda t)=\lambda^{\ell(J)}q_J(s,t)$. 
This follows from the fact that any iterated commutator $[Y_{i_1},\dots, Y_{i_k}]$ is a constant linear combination of the vector fields $Y_J$'s with $\ell(J)=\sum_{j=1}^k w_{i_j}$ (which in turn is a consequence of Jacobi's identity).
	
	Moreover, using \eqref{eq:truncexpl} and applying \eqref{eq:bchcampi} with the vector fields $Y_J$ in place of $Y_1,\dots,Y_n$, we have the following formal Taylor expansion in $(s,t)$ at $0\in\R^{2n}$
	\[ 
	\psi\big(\exp (P_N(-T,S) )  \circ\exp(T)(0)\big)\sim
	\Big(\ex^{T} \ex^{P_N(-T,S) }\psi\Big)(0), 
	\]
	which, by \eqref{eq:seiex}, coincides with the one of $\psi(\exp(S)(0))$ up to $\delta$-degree $N$. Since this holds for any $\psi$, we deduce (for instance letting $\psi$ range among the coordinate functions) that
	\[ 
	\exp (S)(0)=\exp (P_N(-T,S))  \circ \exp (T)(0)+o(|s|^N+|t|^N), 
	\]
	which by \eqref{EXP} gives
	\[ 
	s=\exp(P_N(-T,S)) (t)+o(|s|^N+|t|^N)=:f(s,t)+o(|s|^N+|t|^N).
	\]
	
	Now let $s=\delta_{1/\lambda}(x)$ and $t=\delta_{1/\lambda}(y)$ with $x,y\in K$.
	Since
	\[ 
	q_J(s,t)=\lambda^{-\ell(J)}q_J(x,y), 
	\]
	by \cite[Theorem~4]{NSW} we get
	\[ 
	d(t,f(s,t))\le C\sum_{1\le\ell(J)\le N}|q_J(s,t)|^{1/\ell(J)}
	=C\lambda^{-1}\sum_{ 1\le\ell(J)\le N}|q_J(x,y)|^{1/\ell(J)}, 
	\]
	while, by \cite[Lemma~2.20(b)]{NSW},
	\[ 
	d(s,f(s,t))=O(|s-f(s,t)|^{1/w_n})=o(|s|+|t|)=o(\lambda^{-1}), 
	\]
	provided  $\lambda$ is sufficiently large.
	Thus, by the triangle inequality,
	\[ 
	\lambda d(\delta_{1/\lambda}(x),\delta_{1/\lambda}(y))=\lambda d(s,t)\le C\sum_{1\le\ell(J)\le N}|q_J(x,y)|^{1/\ell(J)}+\frac{\epsilon}{2} 
	\]
	for all  $\lambda\ge\bar\lambda$, for a suitably large $\bar\lambda>0$. Finally, since $P_N(S,-S)=0$, we can assume that $q_J$ vanishes on the diagonal of $K\times K$ (possibly replacing $q_J(s,t)$ with $q_J(s,t)-q_J(s,s)$). Hence, by compactness of $K$, we also have
	\[ 
	C\sum_{1\le\ell(J)\le N}|q_J(x,y)|^{1/\ell(J)}<\frac{\epsilon}{2} 
	\]
	whenever $x,y\in K$ are such that  $|x-y|<\delta$, for a suitably small $\delta>0$.
\end{proof}

We now introduce the vector fields $Y_1^\infty, \ldots,Y_r^\infty$ in $\R^n$ defined by
	\[
	Y_i^\infty(x) := \sum_{j=1}^n  p_{ij}(x) \frac{\partial}{\partial x_j},
	\]
	and we let $\mathcal X ^{\infty} = \{Y_1^\infty,
\ldots, Y_r^\infty\}$.
 The vector fields $Y_1^\infty, \ldots,Y_r^\infty$ are known as the {\em nilpotent approximation} of $Y_1,\ldots,Y_r$ at the point
$0$.
By Proposition \ref{nilpotenza} below, the pair $(\R^n, \mathcal X^\infty)$ 
is a Carnot--Carath\'eodory structure.
We set $M^\infty:=\R^n$ and we call $(M^\infty, \mathcal
X^\infty)$ a {\em tangent} Carnot--Carath\'eodory structure to $(M,\mathcal
X)$ at the point $x_0 \in M$.

\begin{proposition}\label{nilpotenza}
	The vector fields $Y_1^\infty, \ldots,Y_r^\infty$ are pointwise linearly independent and satisfy the H\"ormander condition in $\R^n$.
	Moreover, any iterated commutator $Y_J^\infty:=[Y_{j_1}^\infty,[\dots,[Y_{j_{k-1}}^\infty,Y_{j_k}^\infty]\dots]]$ of length $\ell(J)=k>s$ vanishes identically.
\end{proposition}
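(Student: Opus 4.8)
The plan is to derive all three assertions — pointwise linear independence, the H\"ormander condition, and the vanishing of long commutators — from the homogeneity structure recorded in Theorem \ref{U}. Recall that for $i=1,\dots,r$ (so $w_i=1$) that theorem gives $Y_i^\infty = \frac{\partial}{\partial x_i} + \sum_{w_j>w_i} p_{ij}\frac{\partial}{\partial x_j}$, since $p_{ij}=\delta_{ij}$ whenever $w_j=w_i$. Thus the $r\times r$ leading block of the coefficient matrix of $Y_1^\infty,\dots,Y_r^\infty$ is the identity, and their pointwise linear independence is immediate.

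Next I would set up a weighted grading: assign to the monomial field $x^\alpha\frac{\partial}{\partial x_j}$ the weighted degree $w_\alpha-w_j$, where $w_\alpha:=\sum_i\alpha_i w_i$. Since $p_{ij}$ is $\delta$-homogeneous of degree $w_j-w_i$, each $Y_i^\infty$ is $\delta$-homogeneous of weighted degree $-w_i=-1$, i.e.\ $(\delta_\lambda)_* Y_i^\infty=\lambda\,Y_i^\infty$. The first routine step is to record that the Lie bracket is additive for this grading (the bracket of fields of weighted degrees $-a$ and $-b$ has weighted degree $-(a+b)$), so that $Y_J^\infty$ is $\delta$-homogeneous of weighted degree $-\ell(J)$ with polynomial coefficients. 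The vanishing statement then follows at once: a homogeneous field of weighted degree $-k$ has as coefficient of $\frac{\partial}{\partial x_j}$ a homogeneous polynomial of degree $w_j-k$; since every weight satisfies $w_j\le s$, when $\ell(J)=k>s$ all these degrees are negative and the coefficients, hence $Y_J^\infty$, vanish identically.

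For the H\"ormander condition I would argue first at the origin and then propagate. Writing $Y_i=Y_i^\infty+R_i$, Theorem \ref{U}(iv) shows $R_i$ has weighted degree $\ge 0$, strictly above that of $Y_i^\infty$; by the additivity of the grading under brackets and an induction on $\ell(J)$, the commutator $Y_J^\infty$ is precisely the weighted-degree-$(-\ell(J))$ homogeneous part of $Y_J$, with remainder $Y_J-Y_J^\infty$ of strictly higher weighted degree. Evaluating at $0$ and using $\frac{\partial}{\partial x_j}\big|_0=Y_j(0)$ from \eqref{EXP}, the leading part gives $Y_J^\infty(0)\in\operatorname{span}\set{Y_j(0):w_j=\ell(J)}$, while the remainder satisfies $(Y_J-Y_J^\infty)(0)\in L^{\ell(J)-1}$. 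An induction on $k$ then yields $\operatorname{span}\set{Y_J^\infty(0):\ell(J)\le k}=L^k$: the inclusion $\subseteq$ is clear, and for $\supseteq$ one combines $L^k=\operatorname{span}\set{Y_J(0):\ell(J)\le k}$ with $Y_J(0)=Y_J^\infty(0)+(Y_J-Y_J^\infty)(0)$ and the inductive identity $L^{k-1}=\operatorname{span}\set{Y_{J'}^\infty(0):\ell(J')\le k-1}$. For $k=s$ this gives $\operatorname{span}\set{Y_J^\infty(0)}=L^s=\R^n$, i.e.\ the H\"ormander condition at $0$.

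The main obstacle is upgrading this from the origin to every point of $\R^n$, as the $Y_i^\infty$ are not translation-invariant a priori (the group structure is only built in Section \ref{4}). Here I would exploit $\delta$-homogeneity: from $(\delta_\lambda)_* Y_J^\infty=\lambda^{\ell(J)}Y_J^\infty$ one gets $Y_J^\infty(\delta_\lambda x)=\lambda^{-\ell(J)}(D\delta_\lambda)_x\,Y_J^\infty(x)$, so, $D\delta_\lambda$ being invertible, the rank $\rho(x):=\dimens\operatorname{span}\set{Y_J^\infty(x)}$ is constant along each dilation orbit $\set{\delta_\lambda x:\lambda>0}$. Finally, since $\rho$ is lower semicontinuous (a nonvanishing minor persists in a neighborhood) and $\delta_\lambda x\to 0$ as $\lambda\to 0$, we obtain $n=\rho(0)\le\liminf_{\lambda\to 0}\rho(\delta_\lambda x)=\rho(x)$, whence $\rho(x)=n$ for every $x$. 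This completes the verification that $(\R^n,\mathcal X^\infty)$ is a CC structure.
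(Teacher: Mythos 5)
Your proposal is correct, but it takes a genuinely different route from the paper's proof. The paper argues analytically: it uses Theorem \ref{U} to show that $\lambda^{-1}(\delta_\lambda)_*Y_i\to Y_i^\infty$ in the local $C^\infty$ topology as $\lambda\to\infty$ (estimating each derivative of the rescaled remainders $\lambda^{w_j-1}r_{ij}\circ\delta_{1/\lambda}$), deduces $\lambda^{-\ell(J)}(\delta_\lambda)_*Y_J\to Y_J^\infty$ for every $J$, and then obtains everything from limits: the identity $\det(Y_{J_1}^\infty,\dots,Y_{J_n}^\infty)(x)=\det(Y_1,\dots,Y_n)(0)\neq 0$ at every $x$ gives pointwise independence and the H\"ormander condition in one stroke, while for $\ell(J)>s$ the bound $\lambda^{s-\ell(J)}|Y_J(\delta_{1/\lambda}(x))|\to 0$ kills the long commutators. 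You instead argue algebraically: independence of $Y_1^\infty,\dots,Y_r^\infty$ read off the identity block $p_{ij}=\delta_{ij}$ for $j\le r$; vanishing of long commutators because their coefficients would be $\delta$-homogeneous polynomials of negative degree; H\"ormander at the origin by identifying $Y_J^\infty$ as the lowest weighted-degree part of $Y_J$ in the formal Taylor grading, so that $(Y_J-Y_J^\infty)(0)\in L^{\ell(J)-1}$ and the induction $\operatorname{span}\set{Y_J^\infty(0):\ell(J)\le k}=L^k$ closes; and propagation to all points via dilation invariance of the rank $\rho$ together with its lower semicontinuity and $\delta_\lambda x\to 0$. All the steps check out: the bracket is indeed additive for the weighted grading, a $\delta$-homogeneous polynomial of negative degree vanishes (compare values at $\delta_\lambda x$ as $\lambda\to 0$), and your rank-propagation argument is sound. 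What each approach buys: the paper's $C^\infty$-convergence statement is not just a tool for this proposition but is reused later (it is exactly \eqref{[1]}, needed for Theorem \ref{LM=LM} and Remark \ref{contreform}), so proving it here is efficient for the paper as a whole; your argument is more self-contained and purely algebraic, requiring no convergence of vector fields, and it isolates the structural fact that $Y_J^\infty$ is the degree-$(-\ell(J))$ homogeneous part of $Y_J$. One detail to make explicit in a final write-up: Theorem \ref{U}(iv) literally gives only $r_{ij}(x)=o(\|x\|^{w_j-w_i})$, so to assert that $R_i$ has weighted degree strictly greater than $-1$ you should note, as the paper does inside its own proof, that this estimate forces every monomial in the Taylor expansion of $r_{ij}$ to have $\delta$-degree greater than $w_j-w_i$, since distinct homogeneous parts scale distinctly under $\delta_\lambda$.
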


\begin{proof}
We claim that Theorem \ref{U} implies $Y_i^\infty=\lim_{\lambda\to\infty}\lambda^{-1}(\delta_\lambda)_*Y_i$, for all $i=1,\dots,r$, in the (local) $C^\infty$-topology (the vector field $\lambda^{-1}(\delta_\lambda)_*Y_i$ being defined on $\delta_\lambda(V)$). Indeed, since $Y_i(x)=Y_i^\infty(x)+\sum_j  r_{ij}(x)\frac{\partial}{\partial x_j}$, we have
	\[ 
	\lambda^{-1}((\delta_{\lambda})_*Y_i)(x)=Y_i^\infty(x)+\sum_{j=1}^n\lambda^{w_j-1} r_{ij}(\delta_{1/\lambda}(x))\frac{\partial}{\partial x_j} ,
	\]
because $\lambda^{-1}(\delta_\lambda)_*Y_i^\infty=Y_i^\infty$. By Theorem
\ref{U}, 
the monomials in the Taylor expansion of $  r_{ij}$ have
$\delta$-degree greater than $w_j-1$. Thus, for any $\alpha\in\N^n$,
	\[ 
	\frac{\partial^{|\alpha|}}{\partial x^\alpha}(\lambda^{w_j-1} r_{ij}(\delta_{1/\lambda}(x)))
	=\lambda^{w_j-1-w_\alpha}\frac{\partial^{|\alpha|} r_{ij}}{\partial x^\alpha}(\delta_{1/\lambda}(x)), 
	\]
	where $w_\alpha:=\sum_\ell\alpha_\ell w_\ell$. The monomials in the expansion of $\frac{\partial^{|\alpha|} r_{ij}}{\partial x^\alpha}$ have $\delta$-degree greater than $w_j-1-w_\alpha$, hence $\Big|\frac{\partial^{|\alpha|} r_{ij}}{\partial x^\alpha}(\delta_{1/\lambda}(x))\Big|=o(\lambda^{-(w_j-1-w_\alpha)})$ and the claim follows.
	
	In particular, we deduce that for any multi-index $J$
	\begin{equation}\label{eq:convcomm}
	Y_{J}^\infty=\lim_{\lambda\to\infty}\lambda^{-\ell(J)}(\delta_\lambda)_*Y_{J} ,
	\end{equation}
	in the local $C^\infty$ topology. 	Hence, defining the $n\times n$ matrix $D_{\lambda}:=$diag$[\lambda^{w_1},\dots,\lambda^{w_n}]$  
	and recalling that $\ell(J_p)=w_p$,  for all $p=1,\dots,n$ we have
	\[ 
	Y_{J_p}^\infty(x)=\lim_{\lambda\to\infty}\lambda^{-w_p}D_\lambda Y_{J_p}(\delta_{1/\lambda}(x)). 
	\]
	Now the first statement follows from
	\[ 
	\begin{split} 
	\det(Y_{J_1}^\infty,\dots,Y_{J_n}^\infty)(x)&=\lim_{\lambda\to\infty}\lambda^{-\sum_i w_i}\det(D_\lambda)\det(Y_{J_1},\dots,Y_{J_n})(\delta_{1/\lambda}(x)) \\
	&=\det(Y_{J_1},\dots,Y_{J_n})(0)=\det(Y_1,\dots,Y_n)(0), 
	\end{split} 
	\]
	which is a nonzero constant. This gives the first part of the statement.
	
In order to prove the last assertion, we use again the fact that $\lambda^{-1}(\delta_\lambda)_*Y_i^\infty=Y_i^\infty$ for $i=1,\dots,r$. For any $x\in\R^n$ and any $J$ with $\ell(J)>s=w_n$ we have, by \eqref{eq:convcomm},
	\[ 
	Y_J^\infty(x)=\lim_{\lambda\to\infty}\lambda^{-\ell(J)}
((\delta_\lambda)_*Y_J)(x)
=\lim_{\lambda\to\infty}\lambda^{-\ell(J)}D_\lambda Y_J(\delta_{1/\lambda}(x)).
	\]
	The right-hand side is bounded by
$\lambda^{s-\ell(J)}|Y_J(\delta_{1/\lambda}(x))|$ (if $\lambda\ge 1$), which
tends to $0$ as $\lambda\to\infty$. This shows that $Y_J^\infty=0$.
\end{proof}

\begin{remark}
	Setting $Y_i^\infty:=Y_{J_i}^\infty$ for $i=1,\dots,n$, the coordinate functions on $M^\infty=\R^n$ are exponential coordinates of the first kind for $(Y_1^\infty,\ldots, Y_n^\infty)$, namely
\begin{equation}\label{EXPbis}
x = \exp\Big(\sum_{i=1}^n x_i Y_i^\infty\Big)(0).
\end{equation}
for any $x\in \R^n$. This follows from the fact that, for $\lambda$ large enough (depending on $x$), we have $y:=\delta_{\lambda^{-1}}(x)\in V$ and, using \eqref{EXP} with $y$ in place of $x$,
\[ x=\delta_\lambda\Big(\exp\Big(\sum_i y_i Y_i\Big)(0)\Big)=\exp\Big(\sum_i x_i\lambda^{-w_i}(\delta_\lambda)_*Y_i\Big)(0)\to\exp\Big(\sum_i x_i Y_i^\infty\Big)(0) \]
as $\lambda\to\infty$, since \eqref{eq:convcomm} gives $\lambda^{-w_i}(\delta_\lambda)_*Y_i\to Y_i^\infty$ in the local $C^\infty$ topology.
\end{remark}

\section{The tangent cone to a horizontal curve}\label{tre}

Let $(M,\mathcal X)$ be a CC
structure and let  $\gamma: [-T,T]\to M$ be a horizontal curve. 
Given $t\in (-T,T)$,
let $\phi$ be a chart centered at $x_0 = \gamma(t)$, as in the previous section,
together with the dilations $\delta _\lambda$ and the tangent CC structure $(M^\infty,\mathcal X^\infty)$ introduced
above.

\begin{definition}\label{TAN}
	The {\em tangent cone} $\Tan(\gamma;t)$ to $\gamma$ at $t\in(-T,T)$ is the set
	of
	all
	horizontal
	curves $\kappa:\R\to M^\infty$ such that there exists
	an infinitesimal sequence $\eta_i\downarrow 0 $ satisfying,
	for any $\tau\in \R$,
	\[
	\lim_{i\to\infty} \delta_{1/\eta_i}\phi \big(\gamma(t+\eta_i\tau)\big)
	=\kappa(\tau),
	\] with uniform convergence on compact subsets of $\R$. 
\end{definition}

We remark that any limit curve as above is automatically $(M^\infty,\mathcal X^\infty)$-horizontal: see e.g. the proof of Theorem \ref{LM=LM}. 

The definition of $\Tan(\gamma; t)$ depends on the 
choice $Y_1,\ldots,Y_n$ of linearly independent iterated commutators.
When $\gamma: [0,T] \to M$, the tangent cones  $\Tan^+(\gamma;0)$ and 
$\Tan^-(\gamma;T)$
can be defined in a similar way:  $\Tan^+(\gamma;0)$ contains curves in
$M^\infty$ defined  on $[0,\infty)$, while $\Tan^-(\gamma;T)$
contains curves defined on $(-\infty,0]$.

When $M=M^\infty$ or $M=G$ is a Carnot group,
there is already a group of dilations on $M$ itself.
In such cases, when $\gamma(t)=0$, we define the tangent
cone $\Tan(\gamma;t)$
as the set of horizontal limit curves of the form
$\displaystyle \kappa(t)=\lim_{i\to\infty}\delta_{1/\eta_i}\gamma(t+\eta_i\tau)$.
 
The tangent cone is closed under uniform convergence of curves
on compact sets.

\begin{proposition} \label{TC}
	For any horizontal curve  $\gamma: [-T,T] \to M$  the tangent
	cone $\Tan(\gamma; t) $ is nonempty for any $t\in (-T,T)$. The same
	holds for $\Tan^+(\gamma;0)$ and 
	$\Tan^-(\gamma;T)$, for a horizontal curve $\gamma:[0,T]\to
	M$.
\end{proposition}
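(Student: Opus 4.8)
The plan is to reduce the statement to a compactness (Arzelà--Ascoli) argument for the rescaled curves, deferring the horizontality of the limit to the weak-$*$ argument recalled in the proof of Theorem \ref{LM=LM}. Fix $t$; after translating time and composing with the chart $\phi$ we may assume $t=0$ and work with $\bar\gamma:=\phi\circ\gamma:[-T,T]\to V\subseteq\R^n$, horizontal with respect to $Y_1,\dots,Y_r$ with bounded control $h$, so that $\bar\gamma$ is Lipschitz for the CC distance: $d(\bar\gamma(a),\bar\gamma(b))\le\|h\|_\infty|a-b|$. Pick an arbitrary infinitesimal sequence $\eta_i\downarrow 0$ and set $\gamma_i(\tau):=\delta_{1/\eta_i}(\bar\gamma(\eta_i\tau))$; it then suffices to extract a subsequence converging uniformly on compact subsets of $\R$ to some $\kappa$, since such a $\kappa$ belongs to $\Tan(\gamma;0)$ by definition.

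Differentiating, $\gamma_i$ solves the control system $\dot\gamma_i(\tau)=\sum_{j=1}^r h_j(\eta_i\tau)\,V_j^{(i)}(\gamma_i(\tau))$, where $V_j^{(i)}:=\eta_i(\delta_{1/\eta_i})_*Y_j$ is defined on the expanding set $\delta_{1/\eta_i}(V)$ and, by the proof of Proposition \ref{nilpotenza}, converges to $Y_j^\infty$ in the local $C^\infty$ topology; meanwhile the controls $\tau\mapsto h(\eta_i\tau)$ are bounded in $L^\infty$ by $\|h\|_\infty$, uniformly in $i$. The heart of the matter, and the step I expect to be the main obstacle, is the a priori equiboundedness of $\{\gamma_i\}$ on compact time-intervals: one must rule out escape to infinity, which is not automatic from the ODE since the $Y_j^\infty$ have polynomially growing coefficients. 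I would obtain this metrically: using the degree-$1$ homogeneity $\|\delta_\mu x\|=\mu\|x\|$ of the anisotropic norm \eqref{eq:norma} together with the ball-box estimate $\|y\|\le C\,d(0,y)$ valid near $0$ (see \cite[Theorem 4]{NSW}), one gets, for $\tau$ in a fixed compact interval and $i$ large (so that $\bar\gamma(\eta_i\tau)$ lies in the ball-box region),
\[
\|\gamma_i(\tau)\|=\tfrac1{\eta_i}\|\bar\gamma(\eta_i\tau)\|\le \tfrac{C}{\eta_i}\,d\big(\bar\gamma(0),\bar\gamma(\eta_i\tau)\big)\le C\|h\|_\infty|\tau|,
\]
so the curves $\gamma_i$ stay in a fixed compact set $K\subset\R^n$ on each $[-R,R]$. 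Alternatively, equiboundedness can be read off the triangular homogeneous structure of the $Y_j^\infty$ provided by Theorem \ref{U}, by solving the system block by block in order of increasing weight.

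With $K$ fixed, the local uniform convergence $V_j^{(i)}\to Y_j^\infty$ yields $\sup_i\sup_K|V_j^{(i)}|<\infty$, whence the control system gives a uniform Lipschitz bound $|\dot\gamma_i|\le L_R$ on $[-R,R]$; thus $\{\gamma_i\}$ is equibounded and equi-Lipschitz there. By Arzelà--Ascoli and a diagonal extraction over $R=1,2,\dots$, a subsequence converges uniformly on every compact subset of $\R$ to a curve $\kappa:\R\to M^\infty$. Finally, extracting a weak-$*$ limit $h^\infty$ of the controls $h(\eta_i\,\cdot)$ and passing to the limit in the integral form of the control system (using $V_j^{(i)}\to Y_j^\infty$ uniformly on $K$) shows $\dot\kappa=\sum_j h_j^\infty Y_j^\infty(\kappa)$, so $\kappa$ is $(M^\infty,\mathcal X^\infty)$-horizontal and hence $\kappa\in\Tan(\gamma;0)$. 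The one-sided cones $\Tan^+(\gamma;0)$ and $\Tan^-(\gamma;T)$ are handled identically, restricting $\tau$ to $[0,\infty)$ or $(-\infty,0]$.
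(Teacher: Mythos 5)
Your proof is correct, and its skeleton (rescale, uniform bounds on compact time intervals, Arzel\`a--Ascoli with diagonal extraction, then horizontality of the limit via weak compactness of the controls) is the same as the paper's; the genuine difference lies in how the key a priori bound is obtained. The paper proves, by induction on the weight $k$, the coordinate-wise estimate $|\gamma_j(t)|\le Ct^{w_j}$ (recorded as \eqref{lia} and as Remark \ref{rem:potenzecoordinatecurve}), using only the structure of the coefficients $a_{ij}=p_{ij}+r_{ij}$ from Theorem \ref{U}; this simultaneously yields $|a_{ij}(\gamma(\eta t))|\le C(\eta t)^{w_j-1}$ and hence equi-Lipschitz continuity of the rescaled curves in one stroke. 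You instead get equiboundedness metrically, from the Lipschitz bound $d(\bar\gamma(0),\bar\gamma(\eta_i\tau))\le\|h\|_\infty\eta_i|\tau|$ combined with the ball-box inequality $\|y\|\le C\,d(0,y)$ of \cite{NSW}, and then recover equi-Lipschitz continuity from the local uniform convergence $Y_j^\lambda\to Y_j^\infty$ of \eqref{[1]}, which is legitimate and non-circular since it is established in the proof of Proposition \ref{nilpotenza}, before Proposition \ref{TC}. Note that the two routes are essentially equivalent in content: $\|\gamma(t)\|\le Ct$ is exactly the ``ball inside box'' estimate along the curve, so the paper's induction is a self-contained proof of precisely the NSW input you cite (and indeed your parenthetical alternative, solving the system block by block in order of increasing weight, \emph{is} the paper's argument). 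What the paper's version buys is self-containedness modulo Theorem \ref{U} and the explicit estimate of Remark \ref{rem:potenzecoordinatecurve}, which is reused later in the proof of Theorem \ref{1.5}; what your version buys is brevity, at the cost of invoking the deep two-sided equivalence of \cite[Theorem~4]{NSW} --- a tool the paper itself is happy to use elsewhere (Lemma \ref{fare}), so the reliance is stylistically consistent. One further small remark: your inclusion of the weak-$*$ passage to the limit equation $\dot\kappa=\sum_j h_j^\infty Y_j^\infty(\kappa)$ is slightly more complete than the paper's proof of Proposition \ref{TC}, which stops at the uniform limit and defers horizontality to the remark following Definition \ref{TAN} and to the proof of Theorem \ref{LM=LM}.
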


\begin{proof}
	We prove  that $\Tan^+(\gamma;0)\neq \emptyset$. The other cases  are
	analogous.  
	
	We use exponential coordinates of the first kind centered at $\gamma(0)$.
	By \eqref{horiz}, we have a.e.
	\[
	\dot\gamma =\sum_{i=1}^r h_i Y_i(\gamma) = \sum_{j=1}^n \sum_{i=1}^r h_i 
	a_{ij}(\gamma) \frac{\partial}{\partial x_j},
	\]
	where $h_i \in L^\infty([0,T])$ and $a_{ij}=p_{ij}+  r_{ij}$, as in Theorem \ref{U}.
	Letting $K:=\gamma([0,T])$,  we have  $|\dot\gamma(t)| \leq C $ for some constant
	depending on
	$  \| a_{ij}\|_{L^\infty(K)}$ and $\|h\|_{L^\infty}$. This implies that $|\gamma(t)|\leq C t$ for all
	$t\in[0,T]$.
	
	By induction on $k\ge 1$, we prove the following statement:
	for any $j$ satisfying $w_j \ge k$ we have $|\gamma_j(t)|\le Ct^k$. 
	The base case $k=1$ has already been treated. 
	Now assume that $w_j\ge k>1$ and that the statement is true for $1,\dots,k-1$.
	Since $r_{ij}$ is smooth, we have $r_{ij}=q_{ij,k}+r_{ij,k}$, where $q_{ij,k}$
	is a polynomial containing only terms with
	$\delta$-homogeneous degree at least $w_j-w_i+1=w_j$ and $|r_{ij,k}(x)|\le C|x|^{k-1}$ on
	$K$ (here $|x|$ denotes the usual Euclidean norm).
	
	Each monomial $c_\alpha x^\alpha$ of the polynomial $p_{ij}+q_{ij,k}$ has
	$\delta$-degree $w_\alpha\ge w_j-1$. If
	$\alpha_m=0$ whenever $w_m\ge k$, then we can estimate
	\[ 
	|\gamma(t)^\alpha|=\prod_{m:w_m\le k-1}|\gamma_m(t)|^{\alpha_m}\le Ct^{w_\alpha}
	\le Ct^{k-1},
	\]
	using the inductive hypothesis with $k$ replaced by $w_m\le k-1$. Otherwise,
	there exists some index $m$ with $w_m\ge k$ and $\alpha_m>0$, in which case
	\[ 
	|\gamma(t)^\alpha|\le C|\gamma_m(t)|\le Ct^{k-1},
	\]
	using the inductive hypothesis with $k$ replaced by $k-1$.
	Thus $|p_{ij}(\gamma(t))+q_{ij,k}(\gamma(t))|\le Ct^{k-1}$.
	Combining this with
	the estimate $|r_{ij,k}(\gamma(t))|\le Ct^{k-1}$, we obtain
	$|a_{ij}(\gamma(t))|\le Ct^{k-1}$. So we finally have
	\[ 
	  |\gamma_j(t)|\le\| h\|_{L^\infty}\sum_{i=1}^r\int_0^t|a_{ij}(\gamma(\tau))|\,d\tau\le Ct^k, 
	\]
	completing the inductive proof. Applying the above statement with $k=w_j$, we obtain
	\begin{equation}\label{lia}
	|\gamma_j(t)| \leq C t^{w_j},
	\end{equation}
	for a suitable constant $C$ depending only on $K$, $T$ and $\norm{h}_{L^\infty}$.
	
	Now we prove that $\Tan^+(\gamma;0)$ is nonempty.
	For $\eta>0$ consider the family of curves $\gamma^\eta(t):=
	\delta_{1/\eta}(\gamma(\eta t))$, defined for $t\in[0, T/\eta]$.
	The derivative of $\gamma^\eta$ is a.e.
	\[
	\dot\gamma^\eta(t) =   \sum_{j=1}^n \sum_{i=1}^r h_i(\eta t) \eta^{1- w_j} 
	a_{ij}(  \gamma(\eta t)) \frac{\partial}{\partial x_j},
	\]
	where, by Theorem \ref{U} and the estimates \eqref{lia}, we have
	\[ |a_{ij}(  \gamma(\eta t))| \leq C \|  \gamma(\eta t)\| ^{w_j-1} \leq C
	(\eta t)^{w_j-1}. \]
	This proves that the family of curves $(\gamma^\eta)_{\eta>0}$ is locally 
	Lipschitz equicontinuous.
	So it has a subsequence $(\gamma^{\eta_i})_i$ that is converging locally uniformly as $\eta_i\to0$
	to a curve $\kappa:[0,\infty) \to\R^n $.
\end{proof}

\begin{remark}\label{rem:potenzecoordinatecurve}
	The following result was obtained along the proof of Proposition \ref{TC}.
	Let $(M,\mathcal X)$ be a Carnot--Carath\'eodory structure. Using
	exponential coordinates of the first kind, we (locally) identify $M$ with
	$\R^n$ and we assign to the coordinate $x_j$ the weight $w_j$, as above. Given $T>0$ and $K$ compact,  there exists a positive constant $C=C(K,T)$ such that the following holds: for any  horizontal curve $\gamma:[0,T]\to K$ parametrized by arclength
	and such that $\gamma(0) = 0$, one has 
	\begin{equation} 
	\label{}
	|\gamma_j(t)|\leq C t^{w_j},\quad \text{for any $j=1,\ldots,n$ and
		$t\in[0,T]$}. 
	\end{equation}
	In Carnot groups, by homogeneity, the constant $C$ is independent of $K$ and $T$.
\end{remark}

\begin{definition}
	We say that $v\in\R^n$ is a \emph{right tangent vector} to a curve $\gamma:[0,T]\to\R^n$ 
	at $0$
	if
	\[
	\gamma(t)=tv+o(t),\quad\text{as }t\to 0^+.
	\]
	The definition of a \emph{left tangent vector} is analogous.
\end{definition}

The next result is stated in exponential coordinates of the first kind.

\begin{theorem} \label{1.5}
	Let $\gamma:[0,T]\to V$ be a horizontal curve parametrized by
	arclength, with $\gamma(0)=0$.
	If $\gamma$ has a right tangent vector $v\in\R^n$ at $0$, then:
	\begin{itemize}
		\item[(i)] $v_j=0$ for $j>r$ and $|v|\le 1$;
		\item[(ii)] $\Tan^+(\gamma;0) = \{\kappa\}$, where   $\kappa (t) = tv$ for		$t\in [0,\infty)$;
		\item[(iii)] $|v|=1$ if $\gamma$ is  also length minimizing.
	\end{itemize}
	A similar statement holds if $\gamma:[-T,0]\to V$ has a left tangent vector at $0$.
\end{theorem}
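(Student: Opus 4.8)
The plan is to prove the three items in order, exploiting throughout the exponential-coordinate identity \eqref{EXP} and the structure of the frame from Theorem \ref{U}.

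For (i), I would read off $v_j=0$ for $j>r$ directly from \eqref{lia} (equivalently Remark \ref{rem:potenzecoordinatecurve}): since $w_j\ge 2$ there, one has $|\gamma_j(t)|\le C t^{w_j}=o(t)$, which is incompatible with $\gamma_j(t)=tv_j+o(t)$ unless $v_j=0$. For the bound $|v|\le1$ I would pass to the control. Writing $\dot\gamma_j=\sum_i h_i a_{ij}(\gamma)$ and using $a_{ij}(0)=\delta_{ij}$ together with continuity of the $a_{ij}$ and $\gamma(t)\to0$, one gets for $j\le r$ that $\gamma_j(t)=\int_0^t h_j\,d\tau+o(t)$, hence $v_j=\lim_{t\to0^+}\frac1t\int_0^t h_j\,d\tau$. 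Since $|v|^2=\sum_{j\le r}v_j^2$, Jensen's inequality together with $|h|=1$ a.e. gives $\sum_{j\le r}\big(\frac1t\int_0^t h_j\,d\tau\big)^2\le\frac1t\int_0^t|h|^2\,d\tau=1$, and letting $t\to0^+$ yields $|v|\le1$.

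For (ii), let $\kappa\in\Tan^+(\gamma;0)$ be any blow-up, obtained along some $\eta_i\downarrow0$; it exists by Proposition \ref{TC} and is $(M^\infty,\mathcal X^\infty)$-horizontal, say $\dot\kappa=\sum_{i\le r}g_iY_i^\infty(\kappa)$. For $j\le r$ the $j$-th component of $\delta_{1/\eta_i}\gamma(\eta_i\tau)$ is $\eta_i^{-1}\gamma_j(\eta_i\tau)\to\tau v_j$, so $\kappa_j(\tau)=\tau v_j$. By Theorem \ref{U}(ii) one has $(Y_i^\infty)_j=p_{ij}=\delta_{ij}$ for $j\le r$, whence $\dot\kappa_j=g_j$ for $j\le r$; comparing with $\kappa_j(\tau)=\tau v_j$ forces $g\equiv v$ (constant). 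Thus every element of $\Tan^+(\gamma;0)$ solves the same autonomous ODE $\dot\kappa=\sum_{i\le r}v_iY_i^\infty(\kappa)$ with $\kappa(0)=0$, whose solution is unique. Finally I would identify this solution with $\tau\mapsto\tau v$ using \eqref{EXPbis}: the integral curve is $\tau\mapsto\exp\big(\tau\sum_{i\le r}v_iY_i^\infty\big)(0)=\exp\big(\sum_i(\tau v)_iY_i^\infty\big)(0)=\tau v$, since $v_j=0$ for $j>r$. Hence $\Tan^+(\gamma;0)=\{\tau\mapsto\tau v\}$.

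For (iii), the key observation is that the same identity \eqref{EXP} shows that the Euclidean segment $s\mapsto sv$ is itself an $M$-horizontal curve with constant control $v$: indeed $sv=\exp(\sum_{i\le r}sv_iY_i)(0)$ is the integral curve through $0$ of $\sum_{i\le r}v_iY_i$, of length $|v|s$, so that $d(0,tv)\le|v|t$. To reach $\gamma(t)$ rather than $tv$ I would estimate the discrepancy by Lemma \ref{fare}: since part (ii) gives $\delta_{1/t}\gamma(t)\to v$, writing $tv=\delta_t(v)$ and $\gamma(t)=\delta_t(\delta_{1/t}\gamma(t))$ and applying the lemma with $\lambda=1/t$ yields $d(tv,\gamma(t))=o(t)$. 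Hence $d(0,\gamma(t))\le|v|t+o(t)$, and since $\gamma$ is arclength-parametrized and minimizing we have $d(0,\gamma(t))=t$; dividing by $t$ and letting $t\to0^+$ gives $1\le|v|$, which with (i) yields $|v|=1$. The statement for a left tangent vector follows by applying the above to $t\mapsto\gamma(-t)$. The main obstacle is precisely this passage from the model competitor $s\mapsto sv$ to the actual endpoint $\gamma(t)$: everything hinges on controlling $d(tv,\gamma(t))$ at the correct scale, which is exactly what Lemma \ref{fare} provides once (ii) is known, while the two uses of \eqref{EXP}--\eqref{EXPbis} — pinning down the blow-up as the straight half-line and exhibiting the horizontal competitor — are the conceptual crux.
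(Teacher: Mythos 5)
Your proof is correct, and while part (i) matches the paper's argument (the paper derives $\gamma_j(t)=o(t)$ for $j>r$ from $Y_i(x)=\frac{\partial}{\partial x_i}+o(1)$ rather than from \eqref{lia}, and bounds $|v|$ by $\frac1t\int_0^t|h|$ rather than by Jensen, but these are equivalent one-liners), your route through (ii) and (iii) is genuinely different from the paper's. For (ii), the paper works directly on the curve $\gamma$: after rotating the first $r$ coordinates so that $v_2=\dots=v_r=0$, it proves the refined estimates $\gamma_j(t)=o(t^{w_j})$ for $j>r$ by induction on $j$, using the structure of the homogeneous polynomials $p_{ij}$ (in particular the absence of the monomial $x_i^{w_j-1}$, via \eqref{QW}) and an integration-by-parts trick based on $H_i(s)=\int_0^s h_i=o(s)$; the convergence of every blow-up to $t\mapsto tv$ is then read off coordinatewise. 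You instead argue softly at the level of the limit: any $\kappa\in\Tan^+(\gamma;0)$ is horizontal (by the remark after the definition of the tangent cone, whose weak-$L^2$ compactness argument is logically independent of Theorem \ref{1.5}, so there is no circularity), its first $r$ components are pinned to $\tau v_j$ by the tangent-vector hypothesis, its control is therefore the constant $v$ since $(Y_i^\infty)_j=\delta_{ij}$ for $i,j\le r$ by Theorem \ref{U}(ii), and ODE uniqueness plus \eqref{EXPbis} identifies $\kappa$ with the half-line; nonemptiness from Proposition \ref{TC} finishes. This bypasses the entire induction, and in fact recovers the paper's estimates $\gamma_j(t)=o(t^{w_j})$ a posteriori (full convergence of $\delta_{1/\eta}\gamma(\eta\,\cdot)$ at $\tau=1$), whereas the paper's computation has the merit of exhibiting the explicit algebraic mechanism on the original curve. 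For (iii), the paper simply cites Theorem \ref{LM=LM} (the blow-up is parametrized by arclength and has continuous control equal to $v$ at $0$), while you construct a direct competitor: by \eqref{EXP} the segment $s\mapsto sv$ is the integral curve of $\sum_{i\le r}v_iY_i$, hence horizontal in $(M,\mathcal X)$ of length $|v|t$, and Lemma \ref{fare} gives $d(\gamma(t),tv)=o(t)$; this is more elementary in that it avoids the full blow-up minimality theorem, though it exploits Lemma \ref{fare} exactly as the proof of Theorem \ref{LM=LM} does. Two small steps deserve to be made explicit in your write-up: the full convergence $\delta_{1/t}\gamma(t)\to v$ as $t\to0^+$ (not merely along subsequences) follows from (ii) combined with the local Lipschitz equicontinuity of the family $\gamma^\eta$ established in the proof of Proposition \ref{TC}; and $d(0,\gamma(t))=t$ uses the standard fact that subarcs of length minimizers are length minimizers.
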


\begin{proof}
	(i) Since $Y_i(x)=\frac{\partial}{\partial x_i}+o(1)$ as $x\to 0$, we have
	\begin{equation}\label{eq:ovvia}
	\gamma_j(t) = \int_0^ t \sum_{i=1}^r h_i(s)\delta_{ij}\,ds + o(t).
	\end{equation}
	We deduce that $v_j=0$ for $j>r$ and
	\[ |v|=\lim_{t\to 0^+}\Big|\frac{\gamma(t)}{t}\Big|\le\lim_{t\to 0^+}\frac 1t\int_0^t|h(s)|\,ds=1. \]
	
	(ii) Since $\gamma_j(t)=v_jt+o(t)$ for $j\le r$, it suffices to show that
	\begin{equation}
	\label{CL}
	\gamma_j(t) =o(  t^{w_j}),\quad j>r.
	\end{equation}
	Up to a rotation of the vector fields $Y_1,\dots,Y_r$, which by \eqref{EXP} corresponds to a rotation of the first $r$ coordinates, we can assume that 
	$v_2=\ldots=v_r=0$. Notice that Theorem \ref{U} still applies in these new exponential coordinates.
	From \eqref{eq:ovvia} we get
	\begin{equation}
	\label{viva}
	\lim_{t\to0^+ } \frac 1t \int_0^ t h_i(s)\, ds =
	\left\{
	\begin{array}{ll}
	v_1 & i=1
	\\
	0 & i=2,\ldots,r.
	\end{array}
	\right.
	\end{equation}
	By Remark \ref{rem:potenzecoordinatecurve} we have $\|\gamma(t)\|=O(t)$.
	We now show \eqref{CL} by induction on $j\geq r+1$.
	
	Assume the claim holds for $ r+1,\ldots,j-1$.
	The coordinate $\gamma_{j}$, with $j>r$, is
	\[
	\gamma_{j}(t) = \sum_{i=1}^r \int_0^t h_i(s) a_{ij}(\gamma(s))\, ds
	=  \int_0^t h_1(s) a_{1j}(\gamma(s))\, ds
	+\sum_{i=2}^r \int_0^t h_i(s) a_{ij}(\gamma(s))\, ds.
	\]
	By Theorem \ref{U}, $a_{ij}=p_{ij}+r_{ij}$ with $r_{ij}(x) = o(\| x\|^{w_j-1})$, so we deduce that
	\[
	a_{ij}(\gamma(s)) = p_{ij}(\gamma(s))+ r_{ij}(\gamma(s))
	=p_{ij}(\gamma(s))+ o(s^{w_j-1}),\quad i=1,\dots,r.
	\]
	From \eqref{EXP} we deduce that for $i=1,\ldots,r$ we have $Y_i(0,\dots,x_i,\dots,0)=\frac{\partial}{\partial x_i}$, hence
	\begin{equation}\label{QW}
	a_{ij} (0,\ldots, x_i,\ldots,0) = 0,\quad j>r.
	\end{equation}
	The polynomial  $p_{ij}(x)$ is $\delta$-homogeneous of degree $w_j-w_i =w_j-1$
	and so it contains no variable $x_k$ with $k\geq j$.
	Condition \eqref{QW}  implies that $p_{ij}(x)$
	does not
	contain the monomial $x_i^{w_j-1}$, either. Thus, when $i=1$  each
	monomial in  $p_{1j}(x)$ contains at least one of the variables 
	$x_2,\ldots, x_{j-1}$.
	By the inductive assumption, it follows that  $p_{1j}(\gamma(s)) = 
	o(s^{w_j-1})$, and thus $a_{1j}(\gamma(s)) =
	o(s^{w_j-1})$. This implies that
	\[
	\int_0^t h_1(s) a_{1j}(\gamma(s))\, ds =o(t^{w_j}) .
	\]
	
	Now we consider the case  $i=2,\ldots,r$.
	Letting  $p_{ij} =c_{ij} x_1^{w_j-1}  + \widehat p_{ij}$ with $c_{ij}\in 
	\R$
	and  $\widehat{a}_{ij} := \widehat{p}_{ij} +r_{ij}$, we have
	$\widehat a_{ij} (\gamma(s)) = o (s^{w_j-1})$ as in the previous case and thus
	\[
	\int_0^t h_i(s) \widehat a_{ij}(\gamma(s))\,
	ds =o( t^{w_j}) .
	\]
	We claim that, for $i=2,\ldots,m$, we also have
	\[
	\int_0^t h_i(s)  \gamma_1(s) ^{w_j-1}\,
	ds =o(t^{w_j}).
	\]
	Indeed, since $v_i=0$ we have $H_i(s):=\int_0^s h_i(s')\,ds'=o(s)$, so integration by parts gives
	\[ \begin{split} \int_0^t h_i(s)  \gamma_1(s)^{w_j-1}\,ds&=H_i(t)\gamma_1(t)^{w_j-1}-(w_j-1)\int_0^t H_i(s)\gamma_1(s)^{w_j-2}\dot\gamma_1(s)\,ds \\
	&=o(t^{w_j})+\int_0^t o(s^{w_j-1})\,ds
	=o(t^{w_j}). \end{split} \]
	This ends the proof of  \eqref{CL} and hence of (ii).
	
	(iii) By Theorem \ref{LM=LM} below, $\kappa$ is parametrized by arclength.
	But $(v_1,\dots,v_r)$ equals its (continuous) control $h(t)$ at $t=0$, so $|v|=1$.
\end{proof}

For $\lambda>0$, we define the  vector fields
$Y_1^\lambda ,\ldots,Y_r^\lambda $ in $\delta_\lambda(V)$ by
\[
Y_i^\lambda (x) := \lambda^{-1}((\delta_\lambda)_*Y_i)(x)=\sum_{j=1}^n \lambda ^{w_j - 1} a_{ij}  
(\d_{1/\lambda}(x)) \frac{\partial}{\partial x_j},\quad x\in\delta_\lambda(V).
\]

\noindent In the proof of Proposition \ref{nilpotenza} it was shown that
\begin{equation}\label{[1]} Y_i^\lambda\to Y_i^\infty \end{equation}
locally uniformly in $\R^n$ as $\lambda\to\infty$, together with all the derivatives.

We denote by $d^\lambda$ the Carnot--Carath\'eodory metric of
$(\delta_\lambda(V),\mathcal{X}^\lambda)$, with $\mathcal{X}^\lambda:=\{ Y_1^\lambda,\dots,Y_r^\lambda\}$. The distance function $d^\lambda$
is related to the distance function $d$ via the formula
\begin{equation}
\label{[7]}
d^\lambda (x,y) = \lambda d \big( 
\delta_{1/\lambda}(x),\delta_{1/\lambda}(y)\big),
\end{equation}
for all $x,y\in \delta_\lambda(V) $ and $\lambda>0$.
Indeed, let $\gamma:[0,1]\to V$ be a horizontal curve
\begin{equation}\label{gamma}
\gamma(t) = \gamma(0) + \int_0^t \sum_{i=1}^r h_i(s) Y_i(\gamma(s))\, 
ds,\quad t\in[0,1],
\end{equation}
and define the curve $\gamma^\lambda :[0,\lambda]\to\delta_\lambda(V)$
\begin{equation}
\label{quado}
\gamma^\lambda(t) := \delta_{\lambda} \gamma(t/\lambda),\quad t\in 
[0,\lambda].
\end{equation}
Then we have
\begin{equation}
\label{[qua]}
\gamma^\lambda(t) =
\gamma^\lambda(0) +\int_0^t \sum_{i=1}^r h_i(s/\lambda) 
Y_i^\lambda(\gamma^\lambda(s))\, ds,\quad t\in[0,\lambda],
\end{equation}
and therefore the length of $\gamma^\lambda$ is
\begin{equation}
\label{[6]}
L^\lambda(\gamma^\lambda) = \int_0^\lambda\big| h(s/\lambda)\big|\, ds = 
\lambda \int_0^1 |h(s)|\,ds = \lambda L(\gamma).
\end{equation}

If $\gamma$ is length minimizing, then the curves in $\Tan(\gamma;t)$
are also locally  length minimizing. This is the content of the next theorem.

\begin{theorem} \label{LM=LM} Let $\gamma:[-T,T]\to M$ be a
	length-minimizing
	curve in $(M,\mathcal X)$, parametrized by arclength,
	and let $\gamma^\infty \in \Tan (\gamma;t_0)$ for some $t_0\in (-T,T)$.
	Then $\gamma^\infty$ is  horizontal, parametrized by arclength and, when restricted to any compact interval, it is length minimizing in the
	tangent Carnot--Carath\'eodory structure $(M^\infty, \mathcal X^\infty)$.
\end{theorem}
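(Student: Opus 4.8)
The plan is to establish three things about $\gamma^\infty$: that it is horizontal, that it is parametrized by arclength, and that it is locally length minimizing. The natural strategy is to work in the rescaled structures $(\delta_\lambda(V), \mathcal X^\lambda)$ and exploit the convergence $Y_i^\lambda \to Y_i^\infty$ from \eqref{[1]}, together with the metric scaling relation \eqref{[7]}, which lets me transfer minimality from $\gamma$ at finite scale to $\gamma^\infty$ in the limit. Without loss of generality I take $t_0 = 0$ and use exponential coordinates centered at $\gamma(0)$.

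**First I would** treat horizontality. By definition $\gamma^\infty(\tau) = \lim_i \delta_{1/\eta_i}\gamma(\eta_i \tau)$ uniformly on compact sets, and these are exactly the curves $\gamma^{\eta_i}$ (in the notation of \eqref{quado}, after the reparametrization) studied in Proposition \ref{TC}. Their controls are $h^{\eta_i}(\tau) = h(\eta_i\tau)$, which are uniformly bounded in $L^\infty$ by $\|h\|_{L^\infty} = 1$ since $\gamma$ is arclength-parametrized. Passing to a further weak-$*$ subsequence $h^{\eta_i} \rightharpoonup h^\infty$ in $L^\infty_{loc}(\R;\R^r)$, I can pass to the limit in the integral equation
\[
\gamma^{\eta_i}(\tau) = \gamma^{\eta_i}(0) + \int_0^\tau \sum_{i=1}^r h^{\eta_i}_k(s)\, Y_k^{1/\eta_i}(\gamma^{\eta_i}(s))\, ds,
\]
using uniform convergence of $\gamma^{\eta_i}$, the locally uniform convergence $Y_k^{1/\eta_i} \to Y_k^\infty$ from \eqref{[1]}, and weak convergence of the controls against the (now uniformly convergent, hence strongly convergent in any $L^1_{loc}$ sense after testing) coefficient fields. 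This yields that $\gamma^\infty$ is horizontal in $(M^\infty,\mathcal X^\infty)$ with control $h^\infty$ satisfying $|h^\infty| \le 1$ a.e., so $\gamma^\infty$ is $1$-Lipschitz for $d^\infty$; in particular it is at most arclength-parametrized.

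**Next I would** prove local length minimality, which I expect to be the crux. Fix a compact interval $[a,b]$ and set $p = \gamma^\infty(a)$, $q = \gamma^\infty(b)$. The restriction of $\gamma^{\eta_i}$ to $[a,b]$ has length $\int_a^b |h(\eta_i s)|\,ds = b-a$ (arclength), and by \eqref{[6]}–\eqref{[7]} its length in $(\delta_{1/\eta_i}(V), \mathcal X^{1/\eta_i})$ equals $\eta_i^{-1}$ times the $d$-length of the corresponding piece of $\gamma$, which is minimizing because $\gamma$ is. Thus $\gamma^{\eta_i}|_{[a,b]}$ is $d^{1/\eta_i}$-length minimizing between its endpoints $p_i \to p$, $q_i \to q$, with length $d^{1/\eta_i}(p_i,q_i)$. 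The key analytic input is that $d^{1/\eta_i} \to d^\infty$ locally uniformly, which is where Lemma \ref{fare} enters: it controls $\lambda\, d(\delta_{1/\lambda}(x), \delta_{1/\lambda}(y)) = d^\lambda(x,y)$ for nearby points, giving equicontinuity of the metrics $d^\lambda$ on compact sets, and the lower bound on $d^\infty$ comes from the convergence of the vector fields. Combining, I get
\[
\mathrm{length}_{d^\infty}(\gamma^\infty|_{[a,b]}) \le b-a = \lim_i d^{1/\eta_i}(p_i,q_i) = d^\infty(p,q),
\]
so $\gamma^\infty|_{[a,b]}$ is length minimizing; taking $[a,b]$ arbitrary also forces $\gamma^\infty$ to be exactly arclength-parametrized (equality $b-a = d^\infty(p,q)$ rules out $|h^\infty|<1$ on a positive-measure set).

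**The hard part will be** the uniform convergence $d^{1/\eta_i} \to d^\infty$, or more precisely the lower-semicontinuity step ensuring $d^\infty(p,q) \le \liminf_i d^{1/\eta_i}(p_i,q_i)$: a priori a sequence of nearly minimizing curves for $d^{1/\eta_i}$ could have controls whose weak limit produces a shorter $d^\infty$-connection, but one must rule out length being lost in the limit. I would handle this by the standard Ascoli–Arzelà plus weak-$*$ compactness argument applied to the competitor minimizers (their controls are bounded since the lengths $d^{1/\eta_i}(p_i,q_i)$ are bounded via Lemma \ref{fare}), extracting a horizontal limit curve in $(M^\infty,\mathcal X^\infty)$ joining $p$ to $q$ whose length does not exceed $\liminf_i d^{1/\eta_i}(p_i,q_i)$; lower semicontinuity of length under weak-$*$ convergence of controls then closes the estimate. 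This two-sided comparison — upper bound from the blow-up of $\gamma$ itself, lower bound from blowing up competitor minimizers — is what pins down both minimality and the arclength parametrization simultaneously.
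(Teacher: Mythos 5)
Your treatment of horizontality and of the upper length bound matches the paper: you extract a weak-$*$ limit $h^\infty$ of the rescaled controls, use the locally uniform convergence $Y_i^\lambda\to Y_i^\infty$ of \eqref{[1]} to pass to the limit in the integral equation, and conclude $L^\infty\big(\gamma^\infty|_{[a,b]}\big)\le\int_a^b|h^\infty|\,ds\le b-a$. The gap is in the minimality step: you have the crucial semicontinuity inequality \emph{backwards}. Your Ascoli--Arzel\`a/weak-$*$ argument applied to near-minimizers at scale $\eta_i$ produces a horizontal curve in $(M^\infty,\mathcal X^\infty)$ joining $p$ to $q$ of length at most $\liminf_i d^{1/\eta_i}(p_i,q_i)$; that proves $d^\infty(p,q)\le\liminf_i d^{1/\eta_i}(p_i,q_i)=b-a$. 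But this inequality is already known and useless here --- $\gamma^\infty$ itself joins $p$ to $q$ with length at most $b-a$. What minimality requires is the opposite bound $d^\infty(p,q)\ge b-a$, i.e.\ that \emph{no} competitor in the limit structure undercuts $b-a$, equivalently $\limsup_i d^{1/\eta_i}(p_i,q_i)\le d^\infty(p,q)$. Taking limits of finite-scale minimizers cannot deliver this: it cannot exclude a shortcut in $(M^\infty,\mathcal X^\infty)$ that is not a limit of finite-scale curves. Your clause that ``the lower bound on $d^\infty$ comes from the convergence of the vector fields'' names the right ingredient but supplies no mechanism, and the only detailed argument you give is in the direction that was never in doubt.

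The missing idea --- and the actual content of the paper's proof --- is to transplant limit competitors \emph{down}, rather than passing finite-scale competitors up. Given an arbitrary horizontal $\kappa^\infty$ joining $\gamma^\infty(a)$ to $\gamma^\infty(b)$ with control $k^\infty$, one solves $\dot\kappa^{\lambda_h}=\sum_{i=1}^r k_i^\infty\, Y_i^{\lambda_h}(\kappa^{\lambda_h})$ with matching initial point; a Gronwall argument, using $Y_i^{\lambda_h}\to Y_i^\infty$ on a compact neighborhood of the image of $\kappa^\infty$ (which also guarantees existence of the solution on all of $[a,b]$), gives $\kappa^{\lambda_h}\to\kappa^\infty$ uniformly. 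The transplanted curve has $d^{\lambda_h}$-length exactly $\int_a^b|k^\infty|\,ds$ (same control), and its endpoints converge to those of $\gamma^{\lambda_h}$; minimality of $\gamma^{\lambda_h}$ in $(\delta_{\lambda_h}(V),\mathcal X^{\lambda_h})$ and the triangle inequality then give $b-a\le\int_a^b|k^\infty|\,ds+d^{\lambda_h}\big(\kappa^{\lambda_h}(a),\gamma^{\lambda_h}(a)\big)+d^{\lambda_h}\big(\kappa^{\lambda_h}(b),\gamma^{\lambda_h}(b)\big)$, and the two error terms vanish precisely by Lemma \ref{fare} combined with \eqref{[7]}: its real role is this endpoint patching, converting a vanishing Euclidean mismatch into a vanishing $d^{\lambda_h}$-distance uniformly in $\lambda_h$, not the equicontinuity bookkeeping you assigned to it. Since $\kappa^\infty$ was arbitrary, $d^\infty(p,q)\ge b-a$, after which your concluding observation (equality forces $|h^\infty|=1$ a.e., hence arclength parametrization) goes through as in the paper. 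Note also that the full locally uniform convergence $d^{\lambda}\to d^\infty$ you invoke is true (it is essentially the Mitchell and Margulis--Mostow tangent-cone theorem), but proving it requires exactly the transplantation step above, so appealing to it does not let you avoid the missing argument.
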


\begin{proof}
	We can assume $t_0=0$. We use exponential coordinates of the first kind centered at $\gamma(0)$. Given any $\bar T>0$,
	for some sequence 
	$\lambda_h\to\infty$ we have
	\begin{equation} \label{CU}
	\gamma^{\lambda_h}(t):=\delta_{\lambda_h}\gamma(t/\lambda_h)
	\to\gamma^\infty(t)\text{ in }L^\infty([-\bar T,\bar T]).
	\end{equation}
	Up to a subsequence, we can assume that the functions
	$h(t/\lambda_h)$ weakly converge in $L^2([-\bar T,\bar T]; \R^r)$
	to some $h^\infty \in L^2([-\bar T,\bar T];\R^r)$ such that $|h^\infty|\leq 1$ 
	almost everywhere.
	Then, using \eqref{[qua]}, we have
	\[ \gamma^\infty(t)=\lim_{h\to\infty}\int_0^t\sum_{i=1}^r h_i(s/\lambda_h)Y_i^{\lambda_h}(\gamma^{\lambda_h}(s))\,ds=\int_0^t\sum_{i=1}^r h_i^\infty Y_i^\infty(\gamma^\infty(s))\,ds, \]
	so $\gamma^\infty$ is $(M^\infty,\mathcal{X}^\infty)$-horizontal and, denoting by $d^\infty$ the Carnot--Carathéodory distance  on $M^\infty$ induced by the family $\mathcal{X}^\infty$, its length  satisfies
	\begin{equation}\label{lalla}
	d^\infty(\gamma^\infty(-\bar T),\gamma^\infty(\bar T))\le L^\infty\Big(\restr{\gamma^\infty}{[-\bar T,\bar T]}\Big) = \int_{-\bar T}^{\bar T}|h^\infty| \,dt \leq 2\bar T.
	\end{equation}
	We will see that, in fact, the converse inequality
	$d^\infty(\gamma^\infty(-\bar T),\gamma^\infty(\bar T)) \ge  2\bar T$ holds as well, thus proving that $\gamma^\infty$ is length minimizing on $[-\bar T,\bar T]$ and parametrized by arclength (with  control $h^\infty$).
	
	Let $\kappa^\infty:[-\bar T,\bar T]\to\R^n$ be an $(M^\infty,\mathcal X^\infty)$-horizontal curve such that 
	$\kappa^\infty(\pm\bar T)=\gamma^\infty(\pm\bar T)$, with  control $k ^\infty\in L ^\infty([-\bar T,\bar T];\R^n)$.
	For all $h$ large enough, the ordinary differential equation
	\begin{equation}
	\label{INT}
	\dot\kappa^{\lambda_h} (t) = \sum_{i=1}^r k_i^\infty (t) 
	Y_i^{\lambda_h}(\kappa^{\lambda_h}(t))
	\end{equation}
	with initial condition $\kappa^{\lambda_h}(-\bar T)=\kappa^\infty(-\bar T)$ has a (unique) solution defined on $[-\bar T,\bar T]$.
	Indeed, let $K$ be a compact neighborhood of $\kappa^\infty([-\bar T,\bar T])$. For any $\epsilon>0$ we have $\|Y_i^{\lambda_h}-Y_i^\infty\|_{L^\infty(K)}\le\epsilon$ eventually. If $-\bar T\in I\subseteq[-\bar T,\bar T]$ is the maximal (compact) subinterval such that $\kappa^{\lambda_h}$ is defined on $I$ and $\kappa^{\lambda_h}(I)\subseteq K$, we have
	\[ |\dot\kappa^{\lambda_h}-\dot\kappa^\infty|\le C\epsilon+C\sum_i|Y_i^\infty(\kappa^{\lambda_h})-Y_i^\infty(\kappa^\infty)|\le C\epsilon+C|\kappa^{\lambda_h}-\kappa^\infty| \]
	on $I$, for some $C$ depending on $\|k^\infty\|_{L^\infty}$ and $\|\nabla Y_i^\infty\|_{L^\infty(K)}$. Hence, by Gronwall's inequality, $|\kappa^{\lambda_h}-\kappa^\infty|\le C\epsilon$ on $I$. If $\epsilon$ is small enough, we deduce that $\kappa^{\lambda_h}(\max I)$ belongs to the interior of $K$, so $I=[-\bar T,\bar T]$. Since $\epsilon$ was arbitrary, we also get
	\begin{equation} \label{pox}
	\lim_{h\to \infty}\kappa^{\lambda_h} (\pm\bar T) =\kappa^\infty 
	(\pm\bar T)=\gamma^\infty (\pm\bar T)=\lim_{h\to \infty}\gamma^{\lambda_h} (\pm\bar T).
	\end{equation}
	
	From the length minimality of $\gamma^{\lambda_h}$ in 
	$(\delta_{\lambda_h}(V),\mathcal{X}^{\lambda_h})$ it follows that
	\[ \begin{split}
	2\bar T = L^{\lambda_h} \Big(\restr{\gamma^{\lambda_h}}{[-\bar T,\bar T]}\Big)
	&\leq L^{\lambda_h}(\kappa^{\lambda_h}) +
	d^{\lambda_h}\Big( \kappa ^{\lambda_h}(-\bar T),\gamma^{\lambda_h}(-\bar T)\Big)
	+d^{\lambda_h}\Big( \kappa ^{\lambda_h}(\bar T),\gamma^{\lambda_h}(\bar T)\Big) \\
	&= \int_{-\bar T}^{\bar T} |k^\infty (t)|\, dt
	+\lambda_h d\Big(
	\delta_{1/{\lambda_h}} \kappa ^{\lambda_h}(-\bar T),\delta_{1/{\lambda_h}} \gamma ^{\lambda_h}(-\bar T)\Big) \\
	&\phantom{=}+\lambda_h d\Big(
	\delta_{1/{\lambda_h}} \kappa ^{\lambda_h}(\bar T),\delta_{1/{\lambda_h}} \gamma ^{\lambda_h}(\bar T)\Big).
	\end{split} \]
	By Lemma \ref{fare} and \eqref{pox}, we have
	\[
	\lim_{h\to\infty} \lambda_h d(
	\delta_{1/{\lambda_h}} \kappa ^{\lambda_h}(\pm\bar T),\delta_{1/{\lambda_h}} \gamma ^{\lambda_h}(\pm\bar T)) =0.
	\]
	Hence, $2\bar T\le\int_{-\bar T}^{\bar T}|k^\infty(t)|\,dt=L^\infty(\kappa^\infty)$.
	Since $\kappa^\infty$ was arbitrary, we conclude that $d^\infty(\gamma^\infty(-\bar T),\gamma^\infty(\bar T))\ge 2\bar T$.
\end{proof}

The following fact is a special case of the general principle according to which
the tangent to the tangent is (contained in the) tangent.

\begin{proposition}\label{blow-blow}
	Let $\gamma:[-T,T]\to M$ be a horizontal curve and $t\in(-T,T)$. If
	$\kappa
	\in \Tan(\gamma;t)$
	and $\widehat \kappa \in \Tan(\kappa;0)$, then $\widehat \kappa \in \Tan(\gamma;
	t)$.
\end{proposition}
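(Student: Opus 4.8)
The plan is to exploit the one-parameter group property $\delta_a\circ\delta_b=\delta_{ab}$ of the dilations together with a diagonal extraction. For $\eta>0$ write $\gamma^\eta(\tau):=\delta_{1/\eta}\phi(\gamma(t+\eta\tau))$ for the rescaled curve, so that $\kappa\in\Tan(\gamma;t)$ means $\gamma^{\eta_i}\to\kappa$ locally uniformly for some $\eta_i\downarrow0$. Since $\kappa(0)=\lim_i\delta_{1/\eta_i}\phi(\gamma(t))=0$, the blow-up $\Tan(\kappa;0)$ is taken with respect to the dilations and the (self-similar) nilpotent structure already present on $M^\infty$; thus $\widehat\kappa\in\Tan(\kappa;0)$ means $\kappa^{\theta_j}\to\widehat\kappa$ locally uniformly for some $\theta_j\downarrow0$, where $\kappa^\theta(\sigma):=\delta_{1/\theta}\kappa(\theta\sigma)$.

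The crucial observation is the identity
\[
\gamma^{\eta\theta}(\sigma)=\delta_{1/(\eta\theta)}\phi(\gamma(t+\eta\theta\sigma))=\delta_{1/\theta}\gamma^\eta(\theta\sigma),
\]
i.e.\ the blow-up of $\gamma$ at scale $\eta\theta$ is obtained from $\gamma^\eta$ by the rescaling $\sigma\mapsto\delta_{1/\theta}\gamma^\eta(\theta\sigma)$. Fixing $\theta>0$ and letting $i\to\infty$ along $\eta_i$, the fact that $\delta_{1/\theta}$ is a fixed (linear, hence globally Lipschitz) map and the local uniform convergence $\gamma^{\eta_i}\to\kappa$ together yield $\gamma^{\eta_i\theta}\to\kappa^\theta$ locally uniformly, for every fixed $\theta>0$.

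I would then combine the two limits by diagonalization. For each $k\in\N$, first choose $j$ so large that $\sup_{[-k,k]}|\kappa^{\theta_j}-\widehat\kappa|<\tfrac1{2k}$, which is possible because $\kappa^{\theta_j}\to\widehat\kappa$ uniformly on $[-k,k]$; then, with this $j$ fixed, choose $i$ so large that $\sup_{[-k,k]}|\gamma^{\eta_i\theta_j}-\kappa^{\theta_j}|<\tfrac1{2k}$, using the previous paragraph. Setting $\zeta_k:=\eta_i\theta_j$ (arranging along the way that $\zeta_k<\zeta_{k-1}$, which is compatible since $\eta_i,\theta_j\downarrow0$), the triangle inequality gives $\sup_{[-k,k]}|\gamma^{\zeta_k}-\widehat\kappa|<\tfrac1k$. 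Hence $\gamma^{\zeta_k}\to\widehat\kappa$ locally uniformly with $\zeta_k\downarrow0$, so that $\widehat\kappa\in\Tan(\gamma;t)$, its horizontality being already guaranteed by $\widehat\kappa\in\Tan(\kappa;0)$.

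I do not expect a genuine obstacle here, since after the scaling identity the argument is pure diagonalization; the points that require care are bookkeeping ones. First, the rescaled curve $\gamma^\eta$ is a priori defined only for $t+\eta\tau\in[-T,T]$, i.e.\ on an interval expanding to all of $\R$ as $\eta\downarrow0$, so each $\sup_{[-k,k]}$ above is understood for $i,j$ large enough that $\gamma^{\eta_i\theta_j}$ is defined on $[-k,k]$. Second, one must make explicit the identification that blowing up $\kappa$ at $0$ uses the same dilations $\delta_\lambda$ and the same self-nilpotent structure $(M^\infty,\mathcal X^\infty)$ (so that $\widehat\kappa$ again lives in $M^\infty$), which is precisely what lets the group law $\delta_{1/\theta}\circ\delta_{1/\eta}=\delta_{1/(\theta\eta)}$ be applied verbatim.
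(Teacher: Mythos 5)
Your proof is correct and follows essentially the same route as the paper's: both arguments rest on the group law $\delta_{1/\theta}\circ\delta_{1/\eta}=\delta_{1/(\theta\eta)}$ followed by a diagonal extraction, the only cosmetic difference being that the paper pre-scales the second approximation error by $\xi_k$ (exploiting the exact homogeneity $\|\delta_{1/\xi}(x)-\delta_{1/\xi}(y)\|=\xi^{-1}\|x-y\|$), whereas you pass to the limit in $i$ first, for fixed $\theta_j$, using that $\delta_{1/\theta_j}$ is a fixed Lipschitz map. One micro-adjustment: requiring only $\zeta_k<\zeta_{k-1}$ yields monotonicity but not $\zeta_k\downarrow 0$ as the definition of the tangent cone demands; simply also impose $\zeta_k<1/k$ when choosing $i$ at stage $k$, which your construction permits verbatim.
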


\begin{proof}
	We can assume without loss of generality that $t=0$. We use exponential coordinates of the first kind centered at $\gamma(0)$. Let $N>0$ be fixed. Since $\widehat \kappa \in \Tan(\kappa;0)$, 
	there exists an infinitesimal
	sequence $\xi_k\downarrow0$ such that,
	for all $t\in [-N,N]$ and $k\in\N$, we have
	\[
	\|
	\widehat \kappa (t) - \delta_{1/\xi_k} \kappa(\xi_k t)\| \leq \frac {1}{2^k}.
	\]
	Since $ \kappa \in \Tan(\gamma;0)$, there exists an infinitesimal sequence
	$\eta_k\downarrow0$ such that,
	for all $t\in [-N,N]$ and $k\in\N$, we have
	\[
	\|
	\kappa (\xi_k t) - \delta_{1/\eta_k} \gamma(\eta_k\xi_k t)\| \leq \frac
	{\xi_k}{2^k}.
	\]
	It follows that for the infinitesimal sequence $\sigma_k := \xi_k\eta_k$ we have,
	for all $t\in[-N,N]$,
	\[
	\|
	\widehat \kappa (t) -  \delta_{1/\sigma_k} \kappa(\sigma_k t)\|
	\leq
	\|
	\widehat \kappa (t) - \delta_{1/\xi_k} \kappa(\xi_k t)\|+
	\| \delta_{1/\xi_k} \kappa(\xi_k t) - \delta_{1/\sigma_k} \gamma(\sigma_k
	t)\|\leq \frac{1}{2^{k-1}}.
	\]
	The thesis now follows by a diagonal argument.
\end{proof}

When $\gamma:[0,T]\to M$, 
there are analogous  versions of Propositions \ref{LM=LM} and
\ref{blow-blow}  for $\Tan^+(\gamma;0)$ and $\Tan^-(\gamma;T)$. 

\begin{proposition}\label{linechar}
	Let $\kappa:\R\to M^\infty$ be a horizontal curve in $(M^\infty,\mathcal X^\infty)$. 
	The following statements are equivalent: 
	\begin{itemize}
		\item[(i)] there exist   $c_1,\ldots,c_r \in\R$ such that 
		$  \dot\kappa  =  \sum_{i=1}^rc_i Y_i^\infty(\kappa )$  
		and 
		$\kappa(0)=0$;
		\item[(ii)]   there exists $x_0\in M^\infty$ such that $\kappa(t) = \delta_t(
		x_0)$ (here $\delta_t$ is defined by \eqref{dil} also for $t<0$). 
	\end{itemize}

\end{proposition}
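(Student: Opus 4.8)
The plan is to prove the two implications separately, using throughout the $\delta$-homogeneity of the vector fields $Y_i^\infty$. Recall from Theorem \ref{U}(i) and the definition of the nilpotent approximation that $Y_i^\infty(x)=\sum_j p_{ij}(x)\frac{\partial}{\partial x_j}$ with $p_{ij}$ a $\delta$-homogeneous polynomial of degree $w_j-w_i$; since $w_i=1$ for $i\le r$, the identity $p_{ij}(\delta_\lambda x)=\lambda^{w_j-1}p_{ij}(x)$ holds for every $x$ and every $\lambda>0$, and --- both sides being polynomials in $\lambda$ --- it persists for all $\lambda\in\R$. Equivalently, writing $Z:=\sum_{i=1}^r c_iY_i^\infty=\sum_j P_j\,\frac{\partial}{\partial x_j}$ with $P_j:=\sum_{i\le r}c_ip_{ij}$, each $P_j$ is $\delta$-homogeneous of degree $w_j-1$, which is the infinitesimal form of $(\delta_\lambda)_*Z=\lambda Z$. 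I will also use that the $Y_i^\infty$ are pointwise linearly independent, by Proposition \ref{nilpotenza}, together with the group law $\delta_\lambda\circ\delta_\mu=\delta_{\lambda\mu}$.

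For (i) $\Rightarrow$ (ii): let $\kappa$ solve $\dot\kappa=Z(\kappa)$ with $\kappa(0)=0$, and fix $\lambda\neq0$. I would set $\kappa_\lambda(t):=\delta_{1/\lambda}(\kappa(\lambda t))$ and check that it solves the same Cauchy problem. Indeed $\kappa_\lambda(0)=\delta_{1/\lambda}(0)=0$, and componentwise $\frac{d}{dt}(\kappa_\lambda)_j(t)=\lambda^{1-w_j}\dot\kappa_j(\lambda t)=\lambda^{1-w_j}P_j(\kappa(\lambda t))$, whereas $P_j(\kappa_\lambda(t))=P_j(\delta_{1/\lambda}\kappa(\lambda t))=\lambda^{1-w_j}P_j(\kappa(\lambda t))$ by the homogeneity of $P_j$; hence $\dot\kappa_\lambda=Z(\kappa_\lambda)$. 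Since $Z$ is smooth, the Cauchy problem has a unique Lipschitz solution, so $\kappa_\lambda\equiv\kappa$, that is $\kappa(\lambda t)=\delta_\lambda(\kappa(t))$ for all $t\in\R$ and all $\lambda\neq0$. Taking $t=1$ and $x_0:=\kappa(1)$ gives $\kappa(\lambda)=\delta_\lambda(x_0)$ for $\lambda\neq0$, while the case $\lambda=0$ is $\kappa(0)=0=\delta_0(x_0)$; this is exactly (ii).

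For (ii) $\Rightarrow$ (i): here $\kappa(t)=\delta_t(x_0)$ is already horizontal, so $\kappa(0)=\delta_0(x_0)=0$ and, for a.e.\ $t$, one has $\dot\kappa(t)\in\operatorname{span}\{Y_1^\infty(\kappa(t)),\dots,Y_r^\infty(\kappa(t))\}$. I would compute $\dot\kappa_j(t)=w_jt^{w_j-1}x_{0,j}$ and, by $\delta$-homogeneity, note that the $j$-th component of $Y_i^\infty(\kappa(t))=Y_i^\infty(\delta_tx_0)$ equals $p_{ij}(\delta_tx_0)=t^{w_j-1}p_{ij}(x_0)$ for $i\le r$. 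Thus both $\dot\kappa(t)$ and every $Y_i^\infty(\kappa(t))$ are obtained by applying the same diagonal matrix $\operatorname{diag}(t^{w_1-1},\dots,t^{w_n-1})$ to the $t$-independent vectors $(w_jx_{0,j})_j$ and $(p_{ij}(x_0))_j$. Evaluating the horizontality inclusion at a single time $t\neq0$, where this matrix is invertible, therefore yields the time-independent condition that $(w_jx_{0,j})_j$ belong to $\operatorname{span}\{(p_{ij}(x_0))_j:i\le r\}$; by pointwise linear independence of the $Y_i^\infty$, there are unique constants $c_1,\dots,c_r$ with $\sum_{i\le r}c_ip_{ij}(x_0)=w_jx_{0,j}$ for all $j$. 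Multiplying by $t^{w_j-1}$ and re-using homogeneity gives $\dot\kappa(t)=\sum_{i\le r}c_iY_i^\infty(\kappa(t))$ for every $t$, which is (i).

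The main points where care is needed are two. First, extending the $\delta$-homogeneity from $\lambda>0$ to all real $\lambda$ (needed to reach negative times) --- this is immediate because the relevant coefficients are polynomials, so a homogeneity identity valid for all positive $\lambda$ holds identically in $\lambda$. Second, invoking uniqueness for $\dot\kappa=Z(\kappa)$ in (i) $\Rightarrow$ (ii): this is legitimate since $Z$ has smooth (polynomial) coefficients, so the two Lipschitz solutions $\kappa$ and $\kappa_\lambda$ sharing the initial value $0$ must coincide. Conceptually, the crux of (ii) $\Rightarrow$ (i) is the observation that the matched homogeneity of the velocity $\dot\kappa$ and of the frame $Y_i^\infty(\kappa)$ renders the horizontality condition independent of $t$, so that constancy of the control follows at once.
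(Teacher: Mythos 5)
Your proof is correct and takes essentially the same route as the paper's: (i)$\Rightarrow$(ii) via the homogeneity $(\delta_\lambda)_*Y_i^\infty=\lambda Y_i^\infty$ (valid for all $\lambda\neq 0$) combined with uniqueness for the Cauchy problem, and (ii)$\Rightarrow$(i) by using that same homogeneity to render the horizontality condition time-independent and then invoking the pointwise linear independence of $Y_1^\infty,\dots,Y_r^\infty$ from Proposition \ref{nilpotenza}. The only difference is cosmetic: you run the second implication in coordinates with the diagonal matrix $\operatorname{diag}(t^{w_1-1},\dots,t^{w_n-1})$, whereas the paper expresses the same computation intrinsically through the identity $s(\delta_{1/s})_*Y_i^\infty=Y_i^\infty$ applied to the control.
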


\begin{proof} 
We prove (i)$\Rightarrow$(ii).
	 Since $(\delta_\lambda)_*Y_i^\infty=\lambda Y_i^\infty$ for $\lambda\neq 0$, the curve $\delta_\lambda\circ\kappa(\cdot/\lambda)$ satisfies the same differential equation, so $\delta_\lambda\circ\kappa(t/\lambda)=\kappa(t)$; choosing $\lambda=t$ we get $\kappa(t)=\delta_t(\kappa(1))$.
	 
We check 	(ii)$\Rightarrow$(i). Up to rescaling time, we can assume that
$\dot\kappa(1)$ exists and is a linear combination of
$Y_1^\infty(\kappa(1)),\dots,Y_r^\infty(\kappa(1))$,
so $\dot\kappa(1)=\sum_i\bar h_i Y_i^\infty(\kappa(1))$ for some $\bar h\in\R^r$. If $h$ is the   control of
$\kappa$, for a.e.~$s$ we have
	\[ \sum_{i=1}^r\bar h_i Y_i^\infty(\kappa(1))=\dot\kappa(1)=s\frac{d}{dt}\kappa(t/s)\Big|_{t=s}=s\frac{d}{dt}(\delta_{1/s}\circ\kappa(t))\Big|_{t=s}=\sum_{i=1}^r h_i(s)Y_i^\infty(\kappa(1)), \]
	again because $s(\delta_{1/s})_*Y_i^\infty=Y_i^\infty$. Since $Y_1^\infty,\dots,Y_r^\infty$ are pointwise linearly independent (see Proposition \ref{nilpotenza}), we get $h=\bar h$ a.e.
\end{proof}

\begin{definition}
	We say that a horizontal curve  $\kappa$ in $(M^\infty,\mathcal X^\infty)$ 
	is a {\em horizontal line}  (through $0$)  if 
	one of the conditions (i)--(ii) of Proposition \ref{linechar} holds.
\end{definition}

\noindent
The definition of \emph{positive and negative half-line} is similar,
the formulas above
being required to hold for $t\geq0$ and  $t\leq0$, respectively.

\begin{remark}\label{contreform}
Let us observe the following fact. Let  $\gamma:[-T,T]\to M$ be a length minimizer parametrized by arclength with control $h=(h_1,\dots,h_r)$ and let $t\in(-T,T)$ be fixed. Then, the tangent cone $\Tan(\gamma;t)$ contains a horizontal line $\kappa$ in $M^\infty$ if and only if there exist an infinitesimal sequence $\eta_i\downarrow 0$ and a constant unit vector $c\in S^{r-1}$ such that
	\[ 
	h(t+\eta_i\,\cdot)\to c\qquad\text{in }L^2_{loc}(\R).
	\]
As usual, an analogous version holds for $\Tan^+(\gamma;0)$ and $\Tan^-(\gamma;T)$ in case $\gamma$ is a length minimizer parametrized by arclength on the interval $[0,T]$.

Let us prove our claim; we can set $t=0$. Assume that there exists a sequence $\eta_i\downarrow 0$ such that the curves $\gamma^i(\tau):=\delta_{1/\eta_i}\varphi(\gamma(\eta_i\tau))$ converge locally uniformly to a horizontal line $\kappa$ in the tangent CC structure $(M^\infty,\mathcal X^\infty)$; we have
\[ 
\gamma^i(\tau)=\int_0^\tau \sum_{j=1}^r h_j(\eta_i s)Y_j^{1/\eta_i}(\gamma^i(s))\,ds. 
\]
Up to subsequences we have $h(\eta_i\,\cdot)\rightharpoonup h_\infty$ in $L^2_{loc}(\R)$, with $\|h_\infty\|_{L^\infty}\le 1$. Since $Y_j^{1/\eta_i}\to Y_j^\infty$ locally uniformly, we obtain
\[ 
\kappa(\tau)=\int_0^\tau\sum_{j=1}^r h_\infty(s)Y_j^\infty(\kappa(s))\,ds. 
\]
By Proposition \ref{LM=LM}, $\kappa$ is parametrized by arclength. So $|h_\infty|=1$ a.e. and, since $\kappa$ is a horizontal line, $h_\infty$ is constant. Finally, for any compact set $K\subset\R$, we trivially have $\|h(\eta_i\,\cdot)\|_{L^2(K)}\to\|h_\infty\|_{L^2(K)}$, which gives $h(\eta_i\,\cdot)\to h_\infty$ in $L^2(K)$. The reverse implication (if $h(t+\eta_i\,\cdot)\to c\text{ in }L^2_{loc}(\R)$, then $\Tan(\gamma;t)$ contains a horizontal line) follows a similar argument.
\end{remark}

\section{Lifting the tangent structure to a free Carnot group}
\label{4}

In this section we show how a tangent CC structure $(M^\infty,\mathcal{X}^\infty)$ 
can be lifted to a free Carnot group $F$, by means of a desingularization process. We also show that length minimizers in $M^\infty$ lift to length minimizers in $F$.

Let $(M^\infty,\mathcal{X}^\infty)$ be a tangent CC structure as in Section
\ref{nilpsec}. The Lie algebra $\g$ generated by
$\mathcal{X}^\infty=(Y_1^\infty,\dots,Y_r^\infty)$ is 
nilpotent because, by Proposition \ref{nilpotenza}, any iterated commutator of
length greater than $s$ vanishes. The identity
$(\delta_\lambda)_*Y_i^\infty=\lambda Y_i^\infty$ implies that
$(\delta_\lambda)_*X\to 0$ pointwise as $\lambda\to 0$, for any $X\in\mathfrak
g$. We deduce that the $j$-th component of $X$ is a polynomial function depending only on the previous
variables. It follows that the flow   $(x,t)\mapsto\exp(tX)(x)$ is
a polynomial function in $(x,t)\in M^\infty\times\R$ and $X$ is
therefore complete.

Let $\mathfrak f$ be the free Lie algebra of rank $r$ and step $s$, with generators $W_1,\dots,W_r$.
The connected, simply connected Lie group $F$ with Lie algebra $\mathfrak f$ can be constructed explicitly as follows:
we let $F:=\mathfrak f$ and we endow $F$ with the group operation $A\cdot
B:=P(A,B)$, 
where  
\begin{equation}\label{eq:pfree} 
P(A,B)=\sum_{p=1}^s\frac{(-1)^{p+1}}{p}\sum_{1\le k_i+\ell_i\le
s}\frac{[A^{k_1},B^{\ell_1},\ldots, A^{k_p},B^{\ell_p}]}{k_1!\cdots
k_p!\ell_1!\cdots\ell_p!\sum_i (k_i+\ell_i)}. 
\end{equation}
This is a finite truncation of the series in \eqref{eq:serieP}: the omitted terms vanish  by the nilpotency of $\mathfrak f$. One readily checks that $P(A,0)=P(0,A)=A$ and $P(A,-A)=P(-A,A)=0$,
while the associativity identity $P(P(A,B),C)=P(A,P(B,C))$ is shown in \cite[Sec. X.2]{Hoc} for free Lie algebras and can be deduced for $\mathfrak f$ by truncation. For any $A\in F$, $t\mapsto tA$ is a one-parameter subgroup. From this, it is straightforward to check that $\mathfrak f$ identifies with the Lie algebra of $F$, with $\exp:\mathfrak f\to F$ given by the identity map. In particular, $\exp:\mathfrak f\to F$ is a diffeomorphism and we have
\begin{equation}\label{eq:bchfree} \exp(A)\exp(B)=\exp(P(A,B)),\quad A,B\in\mathfrak f. \end{equation}

The group $F$ is a \emph{Carnot group}, which means that it is a connected,
simply connected and nilpotent Lie group whose Lie algebra $\mathcal f$ is stratified, i.e.,
it has an assigned decomposition
$ \mathfrak f= \mathfrak f_1\oplus\cdots\oplus \mathfrak f_s $
satisfying $[\mathfrak f_1,\mathfrak f_{i-1}]=\mathfrak f_{i}$ and $[\mathfrak f,\mathfrak f_s]=\{0\}$ (in this case $\mathfrak f_1$ is the linear span of $W_1,\dots,W_r$).
The group $F$ just constructed is called the \emph{free Carnot group of rank $r$ and step $s$}.

\begin{proposition}\label{generation}
	The group $F$ is generated by $\exp(\mathfrak f_1)$.
\end{proposition}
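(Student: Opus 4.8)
The plan is to argue by induction on the step $s$, reducing everything to the claim that the top stratum lies in the subgroup generated by $\exp(\mathfrak f_1)$. Write $H$ for the subgroup of $F$ generated by $\exp(\mathfrak f_1)$, so that $\exp(tW_i)\in H$ for all $t\in\R$ and $i=1,\dots,r$. When $s=1$ the group $F$ is abelian and equals $\exp(\mathfrak f_1)$, so there is nothing to prove. For the inductive step I would set $Z:=\exp(\mathfrak f_s)$; since $\mathfrak f_s$ is central in $\mathfrak f$ and $\exp$ is the identity map, $Z$ is a central subgroup of $F$, and the quotient $F/Z$ is the free Carnot group of rank $r$ and step $s-1$, with Lie algebra $\mathfrak f/\mathfrak f_s$ and first stratum the image of $\mathfrak f_1$. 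Applying the inductive hypothesis to $F/Z$ shows that the image of $H$ is all of $F/Z$, that is, $F=H\cdot Z$. It therefore suffices to prove $Z\subseteq H$.

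The key computation I would establish is that, for $X\in\mathfrak f_1$ and $Y\in\mathfrak f_{s-1}$, the group commutator collapses to a single exponential:
\[ \exp(tX)\exp(Y)\exp(-tX)\exp(-Y)=\exp\big(t[X,Y]\big),\qquad t\in\R. \]
Indeed, by \eqref{eq:bchfree} the left-hand side equals $\exp(A)$, where $A$ is obtained by iterating the Baker--Campbell--Hausdorff series \eqref{eq:pfree}; the leading term of $A$ is $t[X,Y]$, and every further term is an iterated bracket of order at least three in the letters $X,Y$, hence of $\delta$-degree strictly larger than $1+(s-1)=s$, so it vanishes in $\mathfrak f$ by nilpotency. (Here one uses that the only available weights are $1$ and $s-1$, so no higher-order bracket can have total weight exactly $s$.) Thus for each fixed $X\in\mathfrak f_1$ and $Y\in\mathfrak f_{s-1}$ I obtain the entire one-parameter family $\exp(t[X,Y])$, $t\in\R$, realized as group commutators.

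It then remains to place these elements in $H$ and to conclude, which I would do as follows. Given $X\in\mathfrak f_1$ and $Y\in\mathfrak f_{s-1}$, use $F=H\cdot Z$ to write $\exp(Y)=hz$ with $h\in H$ and $z\in Z$. Since $z$ is central it cancels in the group commutator, so $\exp(tX)\exp(Y)\exp(-tX)\exp(-Y)=\exp(tX)\,h\,\exp(-tX)\,h^{-1}\in H$ for every $t\in\R$; by the displayed identity this means $\exp(t[X,Y])\in H$ for all $t\in\R$. Because the brackets $[X,Y]$ with $X\in\mathfrak f_1$, $Y\in\mathfrak f_{s-1}$ span $\mathfrak f_s=[\mathfrak f_1,\mathfrak f_{s-1}]$, and because $\exp$ restricted to the abelian central subgroup $Z$ is additive (so that $\exp(A)\exp(B)=\exp(A+B)$ for $A,B\in\mathfrak f_s$), every $\exp(A)$ with $A\in\mathfrak f_s$ is a finite product of elements $\exp(t_k[X_k,Y_k])\in H$; hence $Z\subseteq H$, and together with $F=H\cdot Z$ this yields $H=F$. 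The one point requiring care — and the reason I use the exact group-commutator identity rather than a softer Lie-algebra generation argument — is that one must produce the \emph{entire} one-parameter subgroups $\{\exp(t[X,Y]):t\in\R\}$, not merely a discrete set of products; this is precisely what the collapse of the commutator to $\exp(t[X,Y])$ delivers once $t$ is allowed to range over all of $\R$.
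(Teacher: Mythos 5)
Your proof is correct, but it necessarily differs from the paper's, because the paper gives no argument at all: Proposition \ref{generation} is disposed of by the citation \cite[Lemma 1.40]{FS}, where the result is obtained for general stratified groups by soft Lie-group theory (the subgroup generated by $\exp(\mathfrak f_1)$ is connected and its Lie algebra contains $\mathfrak f_1$, hence, since $\mathfrak f_1$ Lie-generates $\mathfrak f$, the subgroup is open, and an open subgroup of a connected group is everything). Your induction on the step $s$ is a self-contained algebraic substitute that stays entirely inside the paper's framework \eqref{eq:pfree}--\eqref{eq:bchfree}: centrality of $Z=\exp(\mathfrak f_s)$, the identification of $F/Z$ with the free Carnot group of rank $r$ and step $s-1$ (so that the inductive hypothesis gives $F=HZ$), and the collapse $\exp(tX)\exp(Y)\exp(-tX)\exp(-Y)=\exp(t[X,Y])$ for $X\in\mathfrak f_1$, $Y\in\mathfrak f_{s-1}$, whose justification by the weight count (a bracket with $a\ge 1$ letters $X$, $b\ge 1$ letters $Y$ and $a+b\ge 3$ has weight $a+b(s-1)>s$) is sound and covers the boundary case $s=2$, where $Y\in\mathfrak f_1$. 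The device of writing $\exp(Y)=hz$ with $h\in H$ and $z\in Z$ central, so that the group commutator equals $\exp(tX)\,h\,\exp(-tX)\,h^{-1}\in H$, closes the induction correctly, and the passage from the spanning brackets of $\mathfrak f_s=[\mathfrak f_1,\mathfrak f_{s-1}]$ to all of $Z$ uses exactly the additivity $P(A,B)=A+B$ on the central stratum. Two remarks: your argument nowhere uses freeness of $\mathfrak f$, only the stratification identity $[\mathfrak f_1,\mathfrak f_{s-1}]=\mathfrak f_s$, so it in fact proves the statement for an arbitrary Carnot group, which is the generality in which \cite{FS} states it; and the ``entire one-parameter subgroup'' point you flag at the end is actually automatic, since $\exp(t[X,Y])=\exp([tX,Y])$ with $tX\in\mathfrak f_1$, so the $t=1$ instance of your commutator identity applied to $tX$ would already suffice. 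What your route buys over the citation is an elementary proof that can be made quantitative (unwinding the induction bounds the number of factors from $\exp(\mathfrak f_1)$ needed, in terms of $r$ and $s$ only); what it costs is length compared to one line.
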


\begin{proof}
See \cite[Lemma 1.40]{FS}.
\end{proof}

By the nilpotency of $\g$, there exists  a unique
homomorphism $\psi:\mathfrak f\to\g$ such that $\psi(W_i )=Y_i^\infty\in\g$ for $i=1,\dots,r$.
The group $F$ acts on $M^\infty$ on the right. 
The action $M^\infty\times F\to M^\infty$ is given by 
$
(x,f)\mapsto  x\cdot f:=\exp(\psi(A))(x),
$
where  $f=\exp(A)$. In fact, by \eqref{eq:bchfree}, for any $f'=\exp(B)$ we have
\begin{equation}
\label{action}
x\cdot(ff')=\exp(P(\psi(A),\psi(B)))(x)=\exp(\psi(B))\circ\exp(\psi(A))(x)=(x\cdot f)\cdot f'.
\end{equation} 
The second equality is a consequence of the formula $\exp(P(tY,tX))(x)=\exp(tX)\circ\exp(tY)(x)$ for $X,Y\in\g$ (with $P$ given by \eqref{eq:pfree}), which holds since both sides are polynomial functions in $t$, with the same Taylor expansion (by \eqref{eq:bchcampi}).
We define the map
\[ \pi^\infty:F\to M^\infty,\quad \pi^\infty(f):=0\cdot f, \]
where the dot
stands for the right action of $F$ on $M^\infty$.

Let  $\mathcal W := \{  W_1,\ldots,W_r\}$ and extend $\mathcal W$ to a basis $W_1,\dots,W_N$ of $\mathfrak f$ adapted to the
stratification.
Via the exponential map $\exp:\mathfrak f\to F$, the one-parameter
group of
automorphisms of $\mathfrak f$ defined by $W_k \mapsto 
\lambda ^i W_k $ if and only if $W_k \in \mathfrak f_i$
induces a one-parameter group of automorphisms 
$(\widehat \delta _\lambda)_{\lambda>0}$ of $F$, called \emph{dilations}.

If $A\in \mathfrak f_1$, for
any $\lambda>0$ and $x\in M^\infty$ we have the identity 
\begin{equation}\label{pipox}
\begin{split}
\exp(\lambda \psi(A)) (\delta_\lambda (x)) &  
= \delta _\lambda \big( \exp(\psi(A)) (x)\big),
\end{split}
\end{equation} 
which follows from $(\delta_\lambda)_*\psi(A)=\lambda \psi(A)$. 

\begin{definition}
	We call the CC structure $(F,\mathcal W)$ the \emph{lifting} of $(M^\infty,\mathcal X^\infty)$
	with projection $\pi^\infty : F \to M^\infty$.
\end{definition}

\begin{proposition} \label{hel}
	The lifting $(F,\mathcal W)$   of
	$(M^\infty,\mathcal X^\infty)$ has the following properties:
	\begin{itemize}
		\item[(i)]
		for any $f\in F$ and $i=1,\ldots,r$ we have $\pi
		^\infty_*(W_i(f)) = Y_i^\infty (\pi^\infty(f))$;
		\item[(ii)] the dilations of $F$ and $M^\infty$ commute with the
		projection: namely, for any $\lambda>0$ we have
		\[
		\pi^\infty \circ \widehat\delta_\lambda = \delta_\lambda \circ
		\pi^\infty.
		\]
	\end{itemize}             
\end{proposition}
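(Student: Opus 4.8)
The plan is to treat the two statements separately, exploiting the associativity of the right action \eqref{action} for (i) and the compatibility of $\psi$ with the two gradings for (ii).

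For (i), I would begin by recalling that $W_i$ denotes the left-invariant vector field on $F$ associated with the Lie algebra element $W_i\in\mathfrak f=T_0F$, so that
\[
W_i(f)=\frac{d}{dt}\Big|_{t=0}\,f\cdot\exp(tW_i);
\]
consequently the tangent vector $\pi^\infty_*(W_i(f))$ is represented by the curve $t\mapsto\pi^\infty(f\cdot\exp(tW_i))$. Using $\pi^\infty(g)=0\cdot g$ together with the associativity identity \eqref{action}, one rewrites
\[
\pi^\infty(f\cdot\exp(tW_i))=0\cdot\big(f\cdot\exp(tW_i)\big)=(0\cdot f)\cdot\exp(tW_i)=\exp\big(t\,\psi(W_i)\big)\big(\pi^\infty(f)\big),
\]
where the last step uses the definition of the action and $\psi(tW_i)=t\,\psi(W_i)=t\,Y_i^\infty$. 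Differentiating at $t=0$ then yields exactly $Y_i^\infty(\pi^\infty(f))$, which is (i).

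For (ii), let $f=\exp(A)$ with $A\in\mathfrak f$, so $\pi^\infty(f)=\exp(\psi(A))(0)$, and let $D_\lambda$ be the grading automorphism of $\mathfrak f$ acting as $\lambda^iW_k$ on $W_k\in\mathfrak f_i$, so that $\widehat\delta_\lambda(f)=\exp(D_\lambda A)$. The key algebraic point is the intertwining relation $\psi(D_\lambda A)=(\delta_\lambda)_*\psi(A)$ for every $A\in\mathfrak f$. By linearity it suffices to verify this on the basis $W_1,\dots,W_N$: each $W_k\in\mathfrak f_i$ is an iterated commutator of the generators of length $i$, so, $\psi$ being a Lie algebra homomorphism, $\psi(W_k)=Y_J^\infty$ for a multi-index $J$ with $\ell(J)=i$; the homogeneity $(\delta_\lambda)_*Y_J^\infty=\lambda^{\ell(J)}Y_J^\infty$, a consequence of \eqref{eq:convcomm}, then gives $(\delta_\lambda)_*\psi(W_k)=\lambda^i\psi(W_k)=\psi(\lambda^iW_k)=\psi(D_\lambda W_k)$.

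It remains to combine this with a flow identity. For any $X\in\g$ the flow of $(\delta_\lambda)_*X$ equals $\delta_\lambda\circ\exp(X)\circ\delta_{1/\lambda}$; evaluating at $0$ and using $\delta_{1/\lambda}(0)=0$ gives $\exp((\delta_\lambda)_*X)(0)=\delta_\lambda(\exp(X)(0))$, a direct extension of \eqref{pipox} beyond $\mathfrak f_1$. Applying this with $X=\psi(A)$ and inserting the intertwining relation, I obtain
\[
\pi^\infty(\widehat\delta_\lambda(f))=\exp\big(\psi(D_\lambda A)\big)(0)=\exp\big((\delta_\lambda)_*\psi(A)\big)(0)=\delta_\lambda\big(\exp(\psi(A))(0)\big)=\delta_\lambda(\pi^\infty(f)),
\]
which is (ii). I expect the main obstacle to be the careful bookkeeping in the intertwining relation $\psi\circ D_\lambda=(\delta_\lambda)_*\circ\psi$, since it requires matching the step-$i$ grading on $\mathfrak f$ with the $\lambda^{\ell(J)}$-homogeneity of the commutators $Y_J^\infty$ in $\g$; once this is established, both statements reduce to the short computations above.
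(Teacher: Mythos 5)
Your proof of part (i) is essentially identical to the paper's: both differentiate the curve $t\mapsto 0\cdot\big(f\exp(tW_i)\big)$ and use the associativity of the action \eqref{action} to peel off the factor $\exp(tW_i)$, landing on $\psi(W_i)=Y_i^\infty$ evaluated at $\pi^\infty(f)$.

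Part (ii) is where you take a genuinely different route, and it works. The paper first proves the claim for $f=\exp(W)$ with $W\in\mathfrak f_1$, using the first-layer identity \eqref{pipox} (in the form \eqref{ST}), and then extends it to all of $F$ by writing $f=f_1\cdots f_k$ with each $f_j\in\exp(\mathfrak f_1)$ (Proposition \ref{generation}) and inducting on $k$. You instead argue globally in the exponential chart: $\widehat\delta_\lambda(\exp(A))=\exp(D_\lambda A)$, the intertwining relation $\psi\circ D_\lambda=(\delta_\lambda)_*\circ\psi$ on all of $\mathfrak f$, and the conjugation identity $\exp\big((\delta_\lambda)_*X\big)=\delta_\lambda\circ\exp(X)\circ\delta_{1/\lambda}$ evaluated at $0$. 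All the ingredients are available: the completeness of fields in $\g$ (needed for the time-one flows) is established at the beginning of Section \ref{4}, the conjugation identity is standard for pushforwards under a diffeomorphism, and the homogeneity $(\delta_\lambda)_*Y_J^\infty=\lambda^{\ell(J)}Y_J^\infty$ follows from $(\delta_\lambda)_*Y_i^\infty=\lambda Y_i^\infty$ and bracket-compatibility of pushforward (or, as you say, from \eqref{eq:convcomm}). What your approach buys is that it dispenses with Proposition \ref{generation} and the word-length induction, at the cost of needing homogeneity of \emph{all} iterated commutators rather than only the first layer; the paper's induction keeps everything at the level of $\mathfrak f_1$. One small imprecision to fix: a basis adapted to the stratification need not consist of iterated commutators of the generators; however, since $\mathfrak f_i$ is spanned by length-$i$ commutators of $W_1,\dots,W_r$ and both sides of your intertwining relation are linear in $A$, it suffices to verify it on such commutators (or simply choose a Hall-type basis), so the argument goes through unchanged.
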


\begin{proof} (i) Using the
	action property
	\eqref{action}, we find
	\[ 
	\begin{split}
	\pi_*^\infty(W_i(f))  & =
	\left.\frac{d}{dt} \pi^\infty\big( f  \exp(tW_i )\big)\right|_{t=0}
	=	
	\left.\frac{d}{dt}  0\cdot\big( f  \exp(tW_i
	)\big)\right|_{t=0}
	\\
	& =\left.\frac{d}{dt}\pi^\infty(f) \cdot  
	\exp(tW_i)\right|_{t=0}
	=\psi(W_i)(\pi^\infty(f))
	=Y_i^\infty(\pi^\infty(f)).
	\end{split}
	\]
	
	(ii) Let $\lambda >0$ and $x\in M^\infty $.   By \eqref{pipox}, 
	for
	any
	$W\in\mathfrak f_1$ we have
	\begin{equation} 
	\label{ST}
	\delta_\lambda(x)\cdot\exp(\lambda W)=\exp(\lambda
		\psi(W))(\delta_\lambda(x))
	=\delta_\lambda\big(\exp(\psi(W))(x)\big)=\delta_\lambda(x\cdot\exp(W)).
	\end{equation}
	We deduce that the claim holds for  any $f = \exp(W)$ with $W\in\mathfrak f_1$, because 
	\[
	\pi^\infty(\widehat\delta_\lambda (f))  =
	\pi^\infty(\exp(\lambda W)) =\delta_\lambda(0)\cdot\exp(\lambda W)
	=\delta_\lambda(0\cdot\exp(W))=\delta_\lambda(\pi^\infty (f)).
	\]
	
	By Proposition \ref{generation}, any $f\in F$ is of the form $f = f_1 f_2\ldots f_k$ with each $f_i\in
	\exp(\mathfrak f_1)$. Assume by	induction that the claim holds
	for $\widehat f = f_1 f_2\ldots f_{k-1}$. By \eqref{ST}, letting $f_k=\exp(W)$
	we have
	\[
	\begin{split}
	\pi^\infty(\widehat\delta_\lambda (f)) & = \pi^\infty(\widehat\delta_\lambda(\widehat f) \exp(\lambda W))
	=\pi^\infty( \widehat\delta_\lambda(\widehat f)) \cdot   \exp(\lambda W)
	\\
	&
	= \delta_\lambda (\pi^\infty( \widehat f)) \cdot   \exp(\lambda W)
	= \delta_\lambda\big(  \pi^\infty( \widehat f) \cdot   \exp( W) \big)
	= \delta_\lambda\big(  \pi^\infty(   f) \big). \qedhere
	\end{split}
	\] 
\end{proof}

Let $\kappa:I\to M^\infty$ be a horizontal curve in $(M^\infty, \mathcal X^\infty)$,
with   control $h\in L^\infty(I,\R^r)$.
A horizontal curve $\bar\kappa:I\to F$ such that
\[
\kappa=\pi^\infty\circ\bar\kappa\qquad\text{and}\qquad\dot {\bar  \kappa }(t) = \sum_{i=1}^r h_i(t) W_i(\bar\kappa(t))\quad\text{for
	a.e.~$t\in I$}
\]
is called a {\em lift} of $\kappa$ to $(F,\mathcal W)$. 

\begin{proposition}\label{pippo}
	Let $(F,\mathcal W) $ be the  lifting of $(M^\infty,\mathcal X^\infty)$
	with projection
	$\pi^\infty: F \to M^\infty$.
	Then the following facts hold:
	\begin{itemize}
		\item[(i)]  If $\kappa$ is length minimizing in
		$(M^\infty,\mathcal X^\infty)$,
		then any horizontal lift $\bar \kappa$ of
		$\kappa$ is length minimizing in $(F,\mathcal W)$.
		
		\item[(ii)] If $\bar\kappa$ is a horizontal (half-)line in $F$,
		then
		$\pi^\infty\circ\bar\kappa$ is a horizontal (half-)line in
		$(M^\infty,\mathcal
		X^\infty)$. 
	\end{itemize}
\end{proposition}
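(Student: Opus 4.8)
The key to both parts is that the projection $\pi^\infty$ transports controls: by Proposition \ref{hel}(i), if $\bar\sigma$ is a horizontal curve in $(F,\mathcal W)$ with control $k\in L^\infty(I;\R^r)$, i.e.\ $\dot{\bar\sigma}=\sum_{i=1}^r k_i W_i(\bar\sigma)$, then differentiating $\sigma:=\pi^\infty\circ\bar\sigma$ gives
\[
\dot\sigma=\sum_{i=1}^r k_i\,\pi^\infty_*(W_i(\bar\sigma))=\sum_{i=1}^r k_i\,Y_i^\infty(\sigma),
\]
so $\sigma$ is horizontal in $(M^\infty,\mathcal X^\infty)$ with the \emph{same} control $k$; in particular $L^\infty(\sigma)=\int_I|k|=L_F(\bar\sigma)$. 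Thus $\pi^\infty$ maps horizontal curves to horizontal curves preserving length, and this single observation drives everything.

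To prove (i), let $\bar\kappa:[a,b]\to F$ be a horizontal lift of a length minimizer $\kappa$. By definition of lift, $\bar\kappa$ and $\kappa=\pi^\infty\circ\bar\kappa$ share the control $h$, hence $L_F(\bar\kappa)=L^\infty(\kappa)$. I would then take an arbitrary horizontal competitor $\bar\sigma$ in $F$ joining $\bar\kappa(a)$ to $\bar\kappa(b)$ and project it to $\sigma:=\pi^\infty\circ\bar\sigma$, which joins $\kappa(a)=\pi^\infty(\bar\kappa(a))$ to $\kappa(b)=\pi^\infty(\bar\kappa(b))$ and has the same length as $\bar\sigma$. The length minimality of $\kappa$ then gives
\[
L_F(\bar\sigma)=L^\infty(\sigma)\ge d^\infty(\kappa(a),\kappa(b))=L^\infty(\kappa)=L_F(\bar\kappa).
\]
Taking the infimum over all such $\bar\sigma$ yields $d_F(\bar\kappa(a),\bar\kappa(b))\ge L_F(\bar\kappa)$, and since the reverse inequality is automatic, $\bar\kappa$ is length minimizing.

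For (ii), I would use characterization (ii) of Proposition \ref{linechar}, which applies verbatim to the Carnot group $F$ (its proof uses only the dilation scaling $(\widehat\delta_\lambda)_*W_i=\lambda W_i$ on $\mathfrak f_1$ and the pointwise independence of $\mathcal W$): a horizontal line $\bar\kappa$ in $F$ has the form $\bar\kappa(t)=\widehat\delta_t(f_0)$ for some $f_0\in F$. Applying the commutation relation $\pi^\infty\circ\widehat\delta_t=\delta_t\circ\pi^\infty$ of Proposition \ref{hel}(ii), one obtains
\[
\pi^\infty(\bar\kappa(t))=\delta_t\big(\pi^\infty(f_0)\big),
\]
which is exactly condition (ii) of Proposition \ref{linechar} for $(M^\infty,\mathcal X^\infty)$ with $x_0:=\pi^\infty(f_0)$. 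Hence $\pi^\infty\circ\bar\kappa$ is a horizontal line in $M^\infty$; the half-line case is identical, restricting $t$ to $[0,\infty)$ or $(-\infty,0]$.

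The whole argument reduces to transporting controls and dilations through $\pi^\infty$, so the substantive input is entirely contained in Proposition \ref{hel}. The only point demanding attention, in part (i), is that one must project \emph{every} competitor curve in $F$, not merely the lift itself, and verify that each projection is an admissible curve in $M^\infty$ of equal length; this is precisely where the control-preserving property encoded in Proposition \ref{hel}(i) is indispensable.
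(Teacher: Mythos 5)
Your proposal is correct, and for part (i) it coincides with the paper's own argument: both rest on the single observation, coming from Proposition \ref{hel}(i) together with the pointwise independence of the frames (which makes controls, hence lengths, well defined), that $\pi^\infty$ sends a horizontal curve in $(F,\mathcal W)$ to a horizontal curve in $(M^\infty,\mathcal X^\infty)$ with the \emph{same} control, so that projecting an arbitrary competitor for $\bar\kappa$ produces a competitor for $\kappa$ of equal length. In part (ii) you take a slightly different route from the paper: the paper writes the horizontal line as $\bar\kappa(t)=\exp(tW)$ with $W\in\mathfrak f_1$, projects it using Proposition \ref{hel}(i) to obtain a curve through $0$ with constant control, and invokes characterization (i) of Proposition \ref{linechar}; you instead use the dilation-orbit characterization (ii), $\bar\kappa(t)=\widehat\delta_t(f_0)$, and transport it via the commutation identity $\pi^\infty\circ\widehat\delta_\lambda=\delta_\lambda\circ\pi^\infty$ of Proposition \ref{hel}(ii). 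Both work, and your justification that Proposition \ref{linechar} transfers verbatim to $F$ (using $(\widehat\delta_\lambda)_*W_i=\lambda W_i$ and the independence of $\mathcal W$) is the right one. The only point you should make explicit is that Proposition \ref{hel}(ii) is stated for $\lambda>0$, whereas a full line requires the identity for $t<0$ as well; this is harmless --- both sides are polynomial in $t$, so the identity extends from $\lambda>0$ to all $\lambda\neq 0$, exactly as the paper uses $(\delta_\lambda)_*Y_i^\infty=\lambda Y_i^\infty$ for $\lambda\neq 0$ in the proof of Proposition \ref{linechar} --- but note that the paper's route through the constant-control characterization handles all $t\in\R$ at once and so sidesteps the issue entirely.
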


\begin{proof}
	Claim (i) follows from   $L(\bar\kappa) = L(\kappa)$ and from the inequality $L(\bar{\kappa}')=L(\kappa')\ge L(\kappa)$, whenever $\bar{\kappa}'$ is horizontal with the same endpoints as $\bar\kappa$ and $\kappa'=\pi^\infty\circ\bar{\kappa}'$. We now turn to Claim (ii). 
	Let $\bar \kappa(t) = \exp(t W)$ for some $W\in \mathfrak f_1$. 
	The projection $\pi^\infty\circ\bar \kappa$ is horizontal by part (i) of  
	Proposition \ref{hel}. The thesis follows from characterization (i) for horizontal lines, contained in Proposition \ref{linechar}.
	
\end{proof}

\end{document}